\newtheorem{theorem}{Theorem}[section]
\newtheorem{lemma}[theorem]{Lemma}
\newtheorem{proposition}[theorem]{Proposition}
\newtheorem{corollary}[theorem]{Corollary}
\newtheorem{remark}{Remark}
\theoremstyle{definition}
\newtheorem{example}[theorem]{Example}
\begin{document}
\title[The Bi-Compact-Open Topology on $C(X)$]{The Bi-Compact-Open Topology on $C(X)$}
\author{Anubha Jindal}
\address{Anubha Jindal: Department Of Mathematics, Indian Institute of Technology Delhi,
New Delhi 110016, India.}
\email{jindalanubha217@gmail.com}

\author{R. A. McCoy}
\address{R. A. McCoy: Department of Mathematics, Virginia Tech, Blacksburg VA 24061-0123, U.S.A.}
\email{mccoy@math.vt.edu}

\author{S. Kundu}
\address{S. Kundu:  Department Of Mathematics, Indian Institute of Technology Delhi,
New Delhi 110016, India.}
\email{skundu@maths.iitd.ac.in}

\begin{abstract} For the set $C(X)$ of real-valued continuous functions on a Tychonoff space $X$, the compact-open topology on $C(X)$ is a ``set-open topology." This paper studies the separation and countability properties of the space $C(X)$ having the topology given by the join of the compact-open topology and an ``open-set topology" called the open-point topology, that was introduced in \cite{AMK}.
\end{abstract}

\keywords{Compact-open topology, Open-point topology, Bi-compact-open topology, Separation, Submetrizable, Separable}
\subjclass[2010]{Primary 54C35; Secondary 54D10, 54D15, 54D65, 54D99, 54A10, 54E99}
\maketitle
\section{\textbf{Introduction}}
The set $C(X)$ of all real-valued continuous functions on a Tychonoff space $X$ has a number of natural topologies. One important type of topology on $C(X)$ is the set-open topology, introduced by Arens and Dugunji \cite{AD}, and later studied particularly in \cite{K}, \cite{AO}, \cite{O}, \cite{Osi} and \cite{Velichko}. In the definition of a set-open topology on $C(X)$, we use a certain family of subsets of $X$ and open subsets of $\mathbb{R}$. In \cite{AMK}, by adopting a radically different approach, we have defined two new kinds of topologies on $C(X)$ known as the open-point and bi-point-open topologies. One main reason for adopting such a different approach is to ensure that both $X$ and $\mathbb{R}$ play equally significant roles in the construction of topologies on $C(X)$. This gives a function space where one has more control of the functions in it.

The open-point topology on $C(X)$ has a subbase consisting of sets of the form
$$[U, r]^- = \{f \in C(X) : f^{-1}(r) \cap U\neq\emptyset\},$$
where $U$ is an open subset of $X$ and $r \in \mathbb{R}$. The open-point topology on $C(X)$ is denoted by $h$ and
the space $C(X)$ equipped with the open-point topology $h$ is denoted by $C_h(X)$. The term $``h"$ comes from the word
``horizontal" because, for example, a subbasic open set in $C_h(\mathbb{R})$ can be viewed as the set of functions in $C(\mathbb{R})$ whose
graphs pass through some given horizontal open segment in $\mathbb{R}\times \mathbb{R}$, as opposed to a subbasic open set
in $C_p(\mathbb{R})$ which consists of the set of functions in $C(\mathbb{R})$ whose graphs pass through some given vertical
open segment in $\mathbb{R} \times \mathbb{R}$.

Among the set-open topologies on $C(X)$, the compact open topology $k$ occupies an eminent place. This topology, since its introduction in 1945 by Fox in \cite{f}, has been extensively studied by many people from the view point of topology as well as from the view point of its dual used in analysis. This motivates one to study the open-compact topology on $C(X)$, that is, the topology consisting of subbasic open sets of the form
 $$[U, B]^- = \{f \in C(X) : f^{-1}(B) \cap U\neq\emptyset\},$$
where $U$ is an open subset of $X$ and $B$ is compact set in $\mathbb{R}$. But one can prove that both the collections $$\{[U,r]^-: U \mbox{ is open in }X \mbox{ and }r\in\mathbb{R}\}$$ and $$\{[U,B]^-: U \mbox{ is open in }X \mbox{ and }B \mbox{ is compact in }\mathbb{R}\}$$ generate the same topology on $C(X)$, and that is the open-point topology ``h''.

 As we have defined in \cite{AMK}, the bi-point-open topology on $C(X)$ is the join of the point-open topology $p$ and the open-point topology $h$. In other words, it is the topology having subbasic open sets of both kinds: $[x,V]^{+}=\{f\in C(X): f(x)\in V\}$ and $[U,r]^{-}$, where $x\in X$ and $V$ is an open subset of $\mathbb{R}$, while $U$ is an open subset of $X$ and $r \in \mathbb{R}$. The bi-point-open topology on the space $C(X)$ is denoted by $ph$ and the space $C(X)$ equipped with the bi-point-open topology $ph$ is denoted by $C_{ph}(X)$. One can also view the bi-point-open topology on $C(X)$ as the weak topology on $C(X)$ generated by the identity maps $id_1:C(X)\rightarrow C_p(X)$ and $id_2:C(X)\rightarrow C_h(X)$.

Similarly, we can define the bi-compact-open topology on $C(X)$ as the join of the compact-open topology $k$ and the open-point topology $h$. In other words, it is the topology having subbasic open sets of both kinds: $[A,V]^{+}=\{f \in C(X) : f(A) \subseteq V \}$ and $[U,r]^{-}$, where $A$ is a compact subset of $X$ and $V$ is open in $\mathbb{R}$, while $U$ is an open subset of $X$ and $r \in \mathbb{R}$. The bi-compact-open topology on $C(X)$ is denoted by $kh$ and the space $C(X)$ equipped with the bi-compact-open topology $kh$ is denoted by $C_{kh}(X)$. One can also view the bi-compact-open topology on $C(X)$ as the weak topology on $C(X)$ generated by the identity maps $id_1:C(X)\rightarrow C_k(X)$ and $id_2:C(X)\rightarrow C_h(X)$.

In \cite{AMK} and \cite{AMK2}, we have shown that the space, $C_{ph}(X)$ has some nice properties. For example, the pseudocharacter of $C_{ph}(X)$ is equal to the density of $X$; that is, $\psi(C_{ph}(X))=d(X)$. But most of the properties of the spaces $C_h(X)$ and $C_{ph}(X)$ come under strong restriction on $X$. For example, the space $C_{ph}(X)$ is Tychonoff if and only if the set of isolated points in $X$ is $G_\delta$-dense. The properties of the space $C_{kh}(X)$ seem to be less restricted than the same properties of the spaces $C_h(X)$ and $C_{ph}(X)$. So in the authors' opinion, $C_{kh}(X)$ is more interesting and tends to have stronger topological properties.

In Section 2, we  give some bases and study the separation properties of the space $C_{kh}(X)$. It is shown that the space $C_{kh}(X)$ is completely regular if and only if the set of locally compact points in $X$ is $G_\delta$-dense in $X$. In Section 3, the submetrizability and first countability of the space $C_{kh}(X)$ are characterized. In the last section, we discuss the separability of the space $C_{kh}(X)$.

 Throughout this paper the following conventions are used. The symbols $\mathbb{R}$, $\mathbb{Q}$, $\mathbb{Z}$ and $\mathbb{N}$ denote the space of real numbers, rational numbers, integers and natural numbers, respectively. For a space $X$ the symbol $X^0$ denotes the set of all isolated points in $X$, $|X|$ denotes the cardinality of the space $X$, $\overline{A}$ denotes the closure of $A$ in $X$, $A^c$ denotes the complement of $A$ in $X$ and $0_X$ denotes the constant zero-function in $C(X)$. Also for any two topological spaces $X$ and $Y$ that have the same underlying set, the three expressions $X=Y$, $X \leq Y$ and $X < Y$ mean that, respectively, the topology of $X$ is same as topology of $Y$, the topology of $X$ is weaker than or equal to topology of $Y$ and the topology of $X$ is  strictly weaker than the topology of $Y$. For other basic topological notions, refer to \cite{re}.
\section{Preliminaries}
In this section, we study the separation axioms of the space $C_{kh}(X)$. We first give some bases for the space $C_{kh}(X)$ that are useful in establishing properties for this space. Here $\mathcal{U}$ is some given base for $X$ and $\mathcal{V}$ is some given countable base for $\mathbb{R}$ consisting of bounded open intervals. Now Proposition 2.1 in \cite{AMK} implies the following proposition.

\begin{proposition}\label{base for Ckh(X)}
The space $C_{kh}(X)$ has a base consisting of sets of the form $[A_1,V_1]^+\cap\ldots\cap[A_m,V_m]^+\cap [U_1, r_1]^-\cap\ldots\cap[U_n, r_n]^-$, where $m,n\in\mathbb{N}$, $A_i$ is a compact subset of $X$, $V_i\in \mathcal{V}$, $U_j\in \mathcal{U}$, $r_j\in \mathbb{R}$, whenever $1\leq i\leq m$ and $1\leq j\leq n$, and $\overline{U_i}\cap\overline{U_j}=\emptyset$ for $i\neq j$.
\end{proposition}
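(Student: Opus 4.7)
The plan is to verify the base property directly: given any basic open set $W = \bigcap_{i=1}^{m}[A_i,V_i']^+ \cap \bigcap_{j=1}^{n}[U_j',r_j]^-$ (an arbitrary finite intersection of subbasic sets of $C_{kh}(X)$) and any $f\in W$, I want to produce a set $B$ of the claimed form with $f\in B\subseteq W$. The negative conditions can be handled essentially as in Proposition~2.1 of \cite{AMK}, so the only genuinely new issue is reducing the positive conditions to the shape $[A_i,V_i]^+$ with $V_i\in\mathcal{V}$.

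For the compact-open part, fix an index $i$. Since $f$ is continuous and $A_i$ is compact, $K:=f(A_i)$ is a compact subset of the open set $V_i'$. Using that $\mathcal{V}$ is a base of bounded open intervals and that $\mathbb{R}$ is regular, I cover $K$ by finitely many $W_i^{(1)},\ldots,W_i^{(s_i)}\in\mathcal{V}$ whose closures lie inside corresponding members $V_i^{(1)},\ldots,V_i^{(s_i)}\in\mathcal{V}$, each of which is contained in $V_i'$. Setting $A_i^{(t)}=A_i\cap f^{-1}(\overline{W_i^{(t)}})$ produces a compact subset of $A_i$ with $f(A_i^{(t)})\subseteq V_i^{(t)}$; since the $W_i^{(t)}$ cover $K$, the $A_i^{(t)}$ cover $A_i$. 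Consequently $f\in\bigcap_t[A_i^{(t)},V_i^{(t)}]^+\subseteq[A_i,V_i']^+$, which replaces each positive condition by an intersection of positive conditions of the required form.

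For the negative conditions, for each $j$ pick $x_j\in U_j'\cap f^{-1}(r_j)$. Observe that $x_j=x_k$ forces $r_j=r_k$, so grouping the indices into classes $J_1,\ldots,J_p$ by the value of $x_j$ yields pairwise distinct representatives $y_1,\ldots,y_p$ with $r_j=f(y_l)$ for every $j\in J_l$. The Tychonoff (hence regular Hausdorff) hypothesis on $X$ then furnishes, by a standard simultaneous-shrinking argument within the base $\mathcal{U}$, sets $U_l\in\mathcal{U}$ with $y_l\in U_l\subseteq\bigcap_{j\in J_l}U_j'$ and $\overline{U_l}\cap\overline{U_{l'}}=\emptyset$ for $l\neq l'$; this is the principal technical step and is the content of the cited result in \cite{AMK}. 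Then $f\in\bigcap_l[U_l,f(y_l)]^-\subseteq\bigcap_j[U_j',r_j]^-$, and intersecting with the positive refinement produced above yields a basic open set $B$ of the claimed form with $f\in B\subseteq W$.

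The main obstacle is the simultaneous shrinking in the third paragraph: one must choose the $U_l$ inside the prescribed $U_j'$'s while also separating their closures, and must merge conditions correctly when the chosen $x_j$'s collide. The compact-open refinement is easier because compactness of $f(A_i)$ lets a single $f$ be pinned down by finitely many $\mathcal{V}$-intervals, whereas getting the disjoint-closures condition is where regularity of $X$ is genuinely used.
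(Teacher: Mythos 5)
Your argument is correct: the compact-open refinement via a finite $\mathcal{V}$-cover of $f(A_i)$ and the shrinking of the $U_j'$ to pairwise-closure-disjoint members of $\mathcal{U}$ (after merging indices whose witnesses $x_j$ coincide, which safely forces $r_j=r_k$) together give exactly the required base element. The paper offers no proof here beyond citing Proposition~2.1 of \cite{AMK} for the open-point part, and your direct verification is precisely the standard argument that citation encapsulates, so there is nothing further to reconcile.
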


\begin{proposition}\label{pibase for Ckh(X)}
The space $C_{kh}(X)$ has a $\pi$-base of sets of the form $[A_1,W_1]^+\cap\ldots\cap[A_m,W_m]^+ \cap [U_1, r_1]^-\cap\ldots\cap[U_n, r_n]^-$, where $m,n\in\mathbb{N}$, $A_i$ is a compact subset of $X$, $W_i\in \mathcal{V}$, $U_j\in \mathcal{U}$, $r_j\in \mathbb{R}$, whenever $1\leq i\leq m$ and $1\leq j\leq n$; and for each $j = 1,\ldots,n$, either $U_j\cap A_i=\emptyset$ for all $i=1,\ldots,m$, or $U_j\subseteq\cup\{A_i : r_j\in W_i\}$ and $U_j\cap \cup\{A_i : r_j \notin W_i\}=\emptyset$, and $\overline{U_i}\cap \overline{U_j}=\emptyset$ for $i\neq j$.
\end{proposition}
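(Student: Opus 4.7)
The plan is to start with a non-empty basic open set $B$ supplied by Proposition~\ref{base for Ckh(X)}, namely
$$B = [A_1,V_1]^+ \cap \ldots \cap [A_m,V_m]^+ \cap [U_1, r_1]^- \cap \ldots \cap [U_n, r_n]^-,$$
with $V_i \in \mathcal{V}$, $U_j \in \mathcal{U}$, and $\overline{U_i} \cap \overline{U_j} = \emptyset$ for $i \neq j$, and to shrink each $U_j$ to some $U_j' \in \mathcal{U}$ so as to enforce the disjunctive condition of the proposition, while keeping $W_i := V_i$. Fix $f \in B$ and witness points $x_j \in U_j \cap f^{-1}(r_j)$; since $f(A_i) \subseteq V_i$, the point $x_j$ must avoid every $A_i$ with $r_j \notin V_i$, so
$$O_j := U_j \setminus \bigcup\{A_i : r_j \notin V_i\}$$
is a non-empty open set containing $x_j$ (the subtracted set being closed as a finite union of compact subsets of a Hausdorff space).

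Next I would analyze each index $j$ separately. If $O_j \not\subseteq \bigcup_{i=1}^m A_i$, pick any $y_j$ in the non-empty open set $O_j \setminus \bigcup_i A_i$ and, invoking the regularity of the Tychonoff space $X$, choose $U_j' \in \mathcal{U}$ with $y_j \in U_j'$ and $\overline{U_j'} \subseteq O_j \setminus \bigcup_i A_i$; this yields the first alternative, $U_j' \cap A_i = \emptyset$ for all $i$. Otherwise $O_j \subseteq \bigcup_i A_i$; since $O_j$ already misses every $A_i$ with $r_j \notin V_i$, this forces $O_j \subseteq \bigcup\{A_i : r_j \in V_i\}$, and I pick $U_j' \in \mathcal{U}$ with $x_j \in U_j'$ and $\overline{U_j'} \subseteq O_j$, giving $U_j' \subseteq \bigcup\{A_i : r_j \in V_i\}$ together with $U_j' \cap A_i = \emptyset$ for every $i$ with $r_j \notin V_i$. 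In either case $\overline{U_j'} \subseteq U_j$, so the hypothesis $\overline{U_i} \cap \overline{U_j} = \emptyset$ automatically yields $\overline{U_i'} \cap \overline{U_j'} = \emptyset$ for $i \neq j$.

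It remains to produce $g$ in
$$B' := [A_1,V_1]^+ \cap \ldots \cap [A_m,V_m]^+ \cap [U_1', r_1]^- \cap \ldots \cap [U_n', r_n]^-,$$
which is a subset of $B$ satisfying every condition of the proposition. For each index $j$ in the first alternative, Tychonoffness supplies a continuous $\phi_j : X \to [0,1]$ with $\phi_j(y_j) = 1$ and $\phi_j \equiv 0$ off $U_j'$. Set
$$g := f + \sum_{j \in J_I}(r_j - f(y_j))\,\phi_j,$$
where $J_I$ indexes the first alternative. Since the $\overline{U_j'}$ are pairwise disjoint and each $\phi_j$ vanishes on $\bigcup_i A_i$ (because $U_j' \cap A_i = \emptyset$ whenever $j \in J_I$), a direct check gives $g(A_i) = f(A_i) \subseteq V_i$, $g(y_j) = r_j$ for $j \in J_I$, and $g(x_j) = f(x_j) = r_j$ for the remaining $j$ (using $\phi_{j'}(x_j) = 0$ since $x_j \in U_j$ is disjoint from $\overline{U_{j'}} \supseteq U_{j'}'$); hence $g \in B'$, as required.

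The delicate point is the case split producing $U_j'$: it is the only step where the geometry of the $A_i$'s interacts with the chosen real number $r_j$, and it is exactly what allows one to impose the disjunctive constraint without destroying non-emptiness. The production of $g$ is then a routine Urysohn-type perturbation exploiting the pairwise disjointness of the $\overline{U_j'}$.
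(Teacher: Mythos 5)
Your proof is correct and follows essentially the same route as the paper's: the same case split (whether $U_j$ escapes $A_1\cup\ldots\cup A_m$, since $O_j\setminus\bigcup_i A_i=U_j\setminus\bigcup_i A_i$), the same shrinking of $U_j$ to enforce the disjunctive condition, and the same perturbation of $f$ at new witness points. Your version is in fact slightly more careful than the paper's, in that you shrink to actual members of $\mathcal{U}$ and construct $g$ explicitly via Urysohn functions rather than appealing to an extension from a compact set.
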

\begin{proof}
Let $B=[A_1,W_1]^+\cap\ldots\cap[A_m,W_m]^+ \cap [U_1, r_1]^-\cap\ldots\cap[U_n, r_n]^-$ be any nonempty basic open set in $C_{kh}(X)$, where $A_i$ is a compact subset in $X$ and $W_i\in\mathcal{V}$ for each $i \in \{1,\ldots,m\}$; and $U_j$ is an open set in $X$ and $r_j\in \mathbb{R}$ for each $j \in \{1,\ldots,n\}$; and $\overline{U_i}\cap \overline{U_j}=\emptyset$ for $i\neq j$. So for $f\in B$, there exists $x_j\in U_j$ such that $f(x_j)=r_j$ for each $j \in \{1,\ldots,n\}$ and $f(A_i)\subseteq W_i$ for each $i \in \{1,\ldots,m\}$. Now for each $j=1,\ldots,n$, we have either $U_j\subseteq A_1\cup\ldots\cup A_m$ or $U_j\setminus (A_1\cup\ldots\cup A_m)\neq\emptyset$. Without loss of generality, we can assume that $U_j\subseteq A_1\cup\ldots\cup A_m$ for $1\leq j\leq k$, where $k\leq n$ and $U_j\setminus (A_1\cup\ldots\cup A_m)\neq\emptyset$ for $k+1\leq j\leq n$. Now for $1\leq j\leq k$, $U_j^{'}=U_j\setminus\cup\{A_i : r_j \notin W_i\}$ is a nonempty open subset of $X$ contained in $\cup\{A_i : r_j\in W_i\}$. Also $x_j\in U_j^{'}$ and $U_j^{'}\cap\cup\{A_i : r_j \notin W_i\}=\emptyset$. For $k+1\leq j\leq n$, let $y_j\in U_j\setminus (A_1\cup\ldots\cup A_m)=U_j^{'}$. Since $U_i\cap U_j=\emptyset$, we have $\{y_{k+1},\ldots,y_n\}$ is a collection of distinct points in $X$. Now define $g\in C(X)$ such that $g(x)=f(x)$ for each $x\in A_1\cup\ldots\cup A_m$ and $g(y_j)=r_j$ for $k+1\leq j\leq n$. Then $g\in [A_1,W_1]^+\cap\ldots\cap[A_m,W_m]^+ \cap [U_1^{'}, r_1]^-\cap\ldots\cap[U_n^{'}, r_n]^-=B'$ and $B'\subseteq B$.
\end{proof}

The space $C_{kh}(X)$ has finer topology than the Hausdorff space $C_k(X)$, so that $C_{kh}(X)$ is always a Hausdorff space. However, $C_{kh}(X)$ need not be completely regular. The next theorem characterizes those spaces $X$ for which $C_{kh}(X)$ is completely regular. For this theorem we need two lemmas, the first of which concerns the set of points of a space $X$ that have compact neighborhoods. Let us call such a point a locally compact point of $X$, and let $lc(X)$ denote the set of such points. Also recall that a subset $A$ of a space $X$ is called $G_{\delta}$-dense in $X$ if every nonempty $G_{\delta}$-set in $X$ intersects $A$.

\begin{lemma}\label{locombaselemmaforopenpoint}
If $lc(X)$ is $G_\delta$-dense in $X$, then $C_h(X)$ has a base consisting of sets of the form $[U_1, r_1]^-\cap\ldots\cap[U_n, r_n]^-$, where each open set $U_i$ has compact closure and $\overline{U_i}\cap \overline{U_j}=\emptyset$ for $i\neq j$.
\end{lemma}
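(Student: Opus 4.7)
The plan is to fix an arbitrary basic open set $B = [U_1, r_1]^- \cap \cdots \cap [U_n, r_n]^-$ of $C_h(X)$ and an arbitrary $f \in B$, and to produce a smaller basic open set of the desired form containing $f$. The key observation is that for each $i$, the set $f^{-1}(r_i) \cap U_i$ is a nonempty $G_\delta$-subset of $X$: indeed $\{r_i\}$ is $G_\delta$ in $\mathbb{R}$ so $f^{-1}(r_i)$ is $G_\delta$ in $X$, and $U_i$ is open hence trivially $G_\delta$. Therefore the hypothesis that $lc(X)$ is $G_\delta$-dense yields a point
\[
x_i \in f^{-1}(r_i) \cap U_i \cap lc(X)
\]
for each $i = 1,\ldots,n$.

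The technical wrinkle is that the $x_i$'s need not be distinct, while the target basic set demands pairwise disjoint closures. However, $x_i = x_j$ forces $r_i = f(x_i) = f(x_j) = r_j$, so I enumerate the distinct values $y_1,\ldots,y_m$ occurring among the $x_i$'s, set $I_k = \{i : x_i = y_k\}$, and observe that $r_i$ is constant on $I_k$; call its common value $s_k := f(y_k)$. Each $y_k$ lies in the open neighborhood $\bigcap_{i\in I_k} U_i$.

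Next I shrink. By Hausdorffness of $X$, the distinct points $y_1,\ldots,y_m$ admit pairwise disjoint open neighborhoods $O_1,\ldots,O_m$. Since $y_k \in lc(X)$, the point $y_k$ has a neighborhood base of sets with compact closure; combining this with the regularity of $X$, I pick an open $V_k \ni y_k$ with $\overline{V_k}$ compact and
\[
\overline{V_k} \subseteq O_k \cap \bigcap_{i \in I_k} U_i.
\]
Because the $O_k$'s are pairwise disjoint, $\overline{V_k} \cap \overline{V_\ell} \subseteq O_k \cap O_\ell = \emptyset$ for $k \neq \ell$, so the closures are pairwise disjoint as required.

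Finally, set $B' := [V_1, s_1]^- \cap \cdots \cap [V_m, s_m]^-$. Then $f \in B'$ since $y_k \in V_k$ and $f(y_k) = s_k$. For the inclusion $B' \subseteq B$, given $g \in B'$ and any index $i$, choose $k$ with $i \in I_k$ and a point $z \in V_k$ with $g(z) = s_k$; then $z \in V_k \subseteq U_i$ and $g(z) = s_k = r_i$, hence $g \in [U_i, r_i]^-$. The main (mild) obstacle is the consolidation step that compresses possibly coincident $x_i$'s into the reduced indexing $I_1,\ldots,I_m$; once this bookkeeping is in place, the rest is a routine interplay of Hausdorffness, local compactness at the chosen points, regularity of $X$, and the $G_\delta$-density of $lc(X)$.
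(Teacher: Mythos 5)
Your proof is correct and uses essentially the same mechanism as the paper: apply $G_\delta$-density of $lc(X)$ to the nonempty $G_\delta$-set $f^{-1}(r_i)\cap U_i$ to get a locally compact witness $x_i$, then shrink to an open set with compact closure using regularity. The only difference is that the paper starts from a basic open set whose $U_i$ already have pairwise disjoint closures (available from the previously established base for the open-point topology in Proposition 2.1 of the cited earlier work), so it skips your consolidation of coincident witnesses; your explicit handling of that step via Hausdorff separation is a valid, self-contained substitute.
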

\begin{proof}
Let $G=[V_1, r_1]^-\cap\ldots\cap[V_n, r_n]^-$ be any nonempty basic open set in $C_h(X)$, where $V_i$ is open in $X$, $r_i$ is in $\mathbb{R}$ and $\overline{V_i}\cap \overline{V_j}=\emptyset$ for each $i\neq j$. If $f\in G$, then for each $i \in \{1,\ldots,n\}$, we have $f^{-1}(r_i)\cap V_i\neq\emptyset$. Since $lc(X)$ is $G_\delta$-dense in $X$, for each $i \in \{1,\ldots,n\}$, there exist $x_i$ in $lc(X)$ and an open set $U_i$ having compact closure such that $x_i\in f^{-1}(r_i)\cap V_i$ and $x_i\in U_i\subseteq\overline{U_i}\subseteq V_i$. Since $\overline{V_i}\cap \overline{V_j}=\emptyset$ for $i\neq j$, $\overline{U_i}\cap \overline{U_j}=\emptyset$ for $i\neq j$. Hence $f\in [U_1, r_1]^-\cap\ldots\cap[U_n, r_n]^-\subseteq G$.
\end{proof}

\begin{lemma}\label{zerosetAndContinuousFunction}
 If $A$ is a nonempty $G_\delta$-set in $X$, then there exists $f\in C(X)$ such that $f\geq 0$, $f^{-1}(0)\neq \emptyset$ and $f^{-1}(0)\subseteq A$.
\end{lemma}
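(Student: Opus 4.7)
The plan is to exploit the Tychonoff hypothesis on $X$ (in force throughout the paper) to construct $f$ as a weighted sum of Urysohn-type functions, one for each open set in a $G_\delta$-representation of $A$.

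First I would write $A=\bigcap_{n=1}^{\infty}U_n$ with each $U_n$ open in $X$ and fix some point $x_0\in A$. For each $n$, since $x_0\in U_n$ and $X$ is Tychonoff, there is a continuous $f_n:X\to[0,1]$ such that $f_n(x_0)=0$ and $f_n(y)=1$ for every $y\in X\setminus U_n$. Then I would define
\[
f(x)=\sum_{n=1}^{\infty}\frac{f_n(x)}{2^n}.
\]
By the Weierstrass $M$-test applied with $M_n=2^{-n}$, the series converges uniformly, so $f$ is a well-defined continuous real-valued function on $X$, and clearly $f\geq 0$ since each term is nonnegative.

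Next I would verify the two remaining conditions. Since $f_n(x_0)=0$ for every $n$, we get $f(x_0)=0$, so $f^{-1}(0)\neq\emptyset$. For the inclusion $f^{-1}(0)\subseteq A$, suppose $f(x)=0$. As $f$ is a sum of nonnegative terms, this forces $f_n(x)=0$ for every $n$. Since $f_n$ takes the value $1$ on $X\setminus U_n$, the equality $f_n(x)=0$ implies $x\in U_n$; this holds for every $n$, hence $x\in\bigcap_n U_n=A$.

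There is essentially no hard step here: the only subtlety is making sure the $f_n$'s are chosen so that their vanishing is strong enough to force membership in $A$, which is handled by requiring $f_n\equiv 1$ off $U_n$ (available because $X$ is Tychonoff). Uniform convergence preserves continuity and nonnegativity, and the two required properties of $f^{-1}(0)$ fall out immediately from the construction.
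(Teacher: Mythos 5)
Your proof is correct and complete; it is exactly the standard argument behind the fact the paper simply cites (Gillman--Jerison, 3.11(b)), namely that in a Tychonoff space every point of a $G_\delta$-set lies in a zero-set contained in that $G_\delta$-set. The paper gives no proof of its own beyond the reference, so there is nothing further to compare.
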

\begin{proof}
See 3.11.(b) in \cite{GJ}.
\end{proof}

\begin{theorem}\label{Tychonoffness of C_kh(X)}
For any space $X$, the following are equivalent.
\begin{enumerate}
  \item [$(a)$] $C_{kh}(X)$ is completely regular.
  \item [$(b)$] $C_{kh}(X)$ is regular.
  \item [$(c)$] $lc(X)$ is $G_\delta$-dense in $X$.
\end{enumerate}
\end{theorem}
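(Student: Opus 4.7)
The plan is to prove $(a)\Rightarrow(b)$ trivially, $(c)\Rightarrow(a)$ by constructing explicit cozero neighbourhoods using the compact-closure base from Lemma \ref{locombaselemmaforopenpoint}, and $(b)\Rightarrow(c)$ by contrapositive, using Lemma \ref{zerosetAndContinuousFunction} to produce a function $f\in C(X)$ whose presence witnesses the failure of regularity when $lc(X)$ is not $G_\delta$-dense.

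For $(c)\Rightarrow(a)$, combine Lemma \ref{locombaselemmaforopenpoint} with Proposition \ref{base for Ckh(X)} to obtain a base of $C_{kh}(X)$ consisting of sets $B=[A_1,V_1]^+\cap\cdots\cap[A_m,V_m]^+\cap[U_1,r_1]^-\cap\cdots\cap[U_n,r_n]^-$ in which each $\overline{U_j}$ is compact (and the $\overline{U_j}$ are pairwise disjoint). Fix $f_0\in B$; I will build a cozero neighbourhood of $f_0$ inside $B$ as an intersection of one cozero set per subbasic factor. Each $[A_i,V_i]^+$ is already cozero in $C_k(X)$ by the standard formula involving the signed distance of $f|_{A_i}$ to $\partial V_i$. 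For each $[U_j,r_j]^-$, pick a witness $x_j\in U_j$ with $f_0(x_j)=r_j$ and, by Tychonoffness of $X$, a bump $u_j\colon X\to[0,1]$ with $u_j(x_j)=1$ and $u_j\equiv 0$ off $U_j$; define
$$\phi_j(f)=\sup\{u_j(x):x\in X,\ f(x)=r_j\},\qquad\sup\emptyset:=0.$$
The key step is to check $\phi_j$ is continuous on $C_{kh}(X)$. For upper semicontinuity at $f$ in $C_k(X)$: if $g_\alpha\to f$ with $\phi_j(g_\alpha)\geq\phi_j(f)+\epsilon$, choose witnesses $x_\alpha$ realising the sup up to a small error; they lie in the compact set $\overline{U_j}$, so extract a convergent subnet $x_\alpha\to x^*$, use uniform convergence on $\overline{U_j}$ to deduce $f(x^*)=r_j$, and the condition $u_j\equiv 0$ off $U_j$ forces $x^*\in U_j$, whence $\phi_j(f)\geq u_j(x^*)\geq\phi_j(f)+\epsilon$, a contradiction. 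For lower semicontinuity, with $\phi_j(f)=c>0$ attained at some $x^*\in U_j$, continuity of $u_j$ supplies an open $V'\subseteq U_j$ containing $x^*$ on which $u_j>c-\epsilon$; then the subbasic $C_{kh}$-open $[V',r_j]^-$ is a neighbourhood of $f$ on which $\phi_j\geq c-\epsilon$. Since $\phi_j(f_0)\geq u_j(x_j)=1$ and $\{\phi_j>0\}\subseteq[U_j,r_j]^-$, intersecting all these cozero factors yields a cozero neighbourhood of $f_0$ inside $B$, proving complete regularity.

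For $(b)\Rightarrow(c)$, assume $lc(X)$ is not $G_\delta$-dense, pick a nonempty $G_\delta$-set $A$ disjoint from $lc(X)$, apply Lemma \ref{zerosetAndContinuousFunction} for $f\in C(X)$ with $f\geq 0$, $f^{-1}(0)\neq\emptyset$, $f^{-1}(0)\subseteq A$, and fix $x_0\in f^{-1}(0)$ (so $x_0\notin lc(X)$). The plan is to choose an open $V\ni f$ in $C_{kh}(X)$ and show that no basic $W\ni f$ has $\overline{W}\subseteq V$: for any such $W=\bigcap_i[A_i,V_i]^+\cap\bigcap_j[U_j,r_j]^-$ with witnesses $y_j\in U_j\cap f^{-1}(r_j)$, I will produce $g\in W$ with $g\notin V$, contradicting $W\subseteq\overline{W}\subseteq V$. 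In the generic case where $x_0$ avoids the closed set $K:=\bigcup_i A_i\cup\{y_1,\ldots,y_n\}$, Tychonoffness gives a continuous $h\colon X\to[0,2]$ with $h(x_0)=2$ and $h\equiv 0$ on $K$; then $g:=f+h$ lies in $W$, and with $V=[\{x_0\},(-1,1)]^+$ one has $g(x_0)=2\notin V$. The main obstacle will be the residual case where $x_0\in\bigcup_i A_i$, since then the constraint $g(x_0)\in V_i\subseteq(-1,1)$ clamps $g(x_0)$ inside $V$. I expect to resolve this by using that $x_0\notin lc(X)$ forces the compact set $\bigcup_i A_i$ to fail to be a neighbourhood of $x_0$, so points arbitrarily close to $x_0$ lie outside $\bigcup_i A_i$, and by replacing $V$ with a neighbourhood that also includes an $h$-factor $[U^*,0]^-$ for a cozero neighbourhood $U^*$ of $x_0$; then $g:=f+\epsilon\psi$, with $\psi$ a cozero function for $U^*$, stays in $W$ but violates the $[U^*,0]^-$ part of $V$. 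Coordinating the compact-open constraints of $W$ (which pin $g$ near $x_0$) with the open-point constraints (which demand $g$ hit $r_j$ in $U_j$, potentially at $x_0$ itself when $r_j=0$) is the principal technical hurdle.
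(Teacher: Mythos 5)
Your $(c)\Rightarrow(a)$ argument is sound and is essentially the paper's: your $\phi_j(f)=\sup\{u_j(x):f(x)=r_j\}$ is the mirror image (via $u_j=1-\phi$) of the paper's $\Phi(g)=\inf\phi(g^{-1}(r)\cap U)$, and both exploit compactness of $\overline{U_j}$ (via Lemma \ref{locombaselemmaforopenpoint}) for the half of the continuity that lives in $C_k(X)$. The problem is $(b)\Rightarrow(c)$, which contains a genuine logical flaw. You propose to fix an open $V\ni f$ and, for \emph{every} basic $W\ni f$, produce $g\in W$ with $g\notin V$. This is impossible: $V$ is open, so $V$ contains some basic neighbourhood $W$ of $f$ (indeed $V=[\{x_0\},(-1,1)]^+$ is itself subbasic), and for that $W$ every $g\in W$ lies in $V$. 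Exhibiting $g\in W\setminus V$ shows $W\not\subseteq V$, which is strictly stronger than the needed $\overline{W}\not\subseteq V$; to refute regularity you must, for the small $W$'s contained in $V$, find $g\in\overline{W}\setminus W$ with $g\notin V$, and your construction never engages with the closure. Worse, your primary choice of $V$ cannot witness non-regularity at all: $[\{x_0\},(-1,1)]^+$ is open in the completely regular space $C_k(X)$, so there is a $C_k$-open $W\ni f$ whose $C_k$-closure (which contains its $C_{kh}$-closure) lies inside $V$. Any failure of regularity must be detected at an $h$-type open set.

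The paper's argument has exactly the two-step structure you are missing. It takes $V=[X,0]^-$ (equivalently, the closed set $F=C(X)\setminus[X,0]^-$ not containing $f$) and, given a basic $B\ni f$, first produces $g=\max\{f,\epsilon\}\in F$ satisfying all constraints of $B$ except the factors $[U_i,0]^-$ at the non-locally-compact witnesses; it then shows $g\in\overline{B}$ by intersecting an \emph{arbitrary} basic neighbourhood $B'$ of $g$ with $B$: since each such $x_i$ has no compact neighbourhood, $U_i$ is not contained in the compact set assembled from all the compacta and witness points of $B$ and $B'$, so one can pick $x_i'\in U_i$ outside that compact set and extend a function from the compact set plus the $x_i'$ to all of $X$, landing in $B\cap B'$. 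Your proposed repair ($V=[U^*,0]^-$, $g=f+\epsilon\psi$) moves in the right direction but still aims at $g\in W$, and that $g$ genuinely fails to lie in $W$ whenever $W$ has a factor $[U_j,0]^-$ with $U_j\subseteq U^*$ (one would need $f(y)=-\epsilon\psi(y)$ with $f,\psi\geq 0$, forcing $y\notin U^*$); this is precisely the point where non-local compactness must be used, and used in the closure step rather than in the construction of $g$ itself.
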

\begin{proof}

\noindent $(a)\Rightarrow (b)$. Every completely regular space is regular.

\noindent $(b)\Rightarrow (c)$. Suppose that $lc(X)$ is not $G_\delta$-dense in $X$. So there exists a nonempty $G_\delta$-set $A$ in $X$ such that $lc(X)\cap A=\emptyset$. By Lemma \ref{zerosetAndContinuousFunction} there exists $f\in C(X)$ such that $\emptyset\neq f^{-1}(0)\subseteq A$ and $f\geq 0$. As $lc(X)\cap A=\emptyset$, the set $f^{-1}(0)$ contains no locally compact point. Since $f^{-1}(0)\cap A\neq \emptyset$, $F=C_{kh}(X)\setminus [X,0]^-$ is a closed subset of $C_{kh}(X)$ which does not contain $f$. Let $B=[A_1,V_1]^+\cap\ldots\cap[A_m,V_m]^+\cap[U_1, r_1]^-\cap\ldots\cap[U_n,r_n]^-$ be any basic open set and $W$ be any open set in $C_{kh}(X)$ such that $f\in B$ and $F\subseteq W$, where $\overline{U_i}\cap\overline{U_j}=\emptyset$ for $i\neq j$. Since $f\geq 0$ and $f\in B$, for each $j \in \{1,\ldots,n\}$, there exists $x_j\in U_j$ such that $f(x_j)=r_j\geq 0$ . Let $I=\{i\in \{1,\ldots,n\}: r_i=0\}$ and $J=\{1,\ldots, n\}\setminus I$. When $i\in I$, $x_i$ is not a locally compact point of $X$. Now we show that $W\cap B\neq\emptyset$, and thus $C_{kh}(X)$ is not regular.

  Let $a_i=\min{f(A_i)}$ for $1\leq i\leq m$. If $a_i=0$ for some $i\in\{1,\ldots,m\}$, then $0\in V_i$. For each such $i$, since $V_i$ is open, there exists $\delta_i>0$ such that $[0,\delta_i)\subseteq V_i$. Then take  $S=\{\frac{\delta_i}{2}:a_i=0\}\cup\{\frac{a_i}{2}:a_i\neq 0\}\cup\{\frac{r_j}{2}:j\in J\}$ and $\epsilon=\min\{s:s\in S\}$.
Now define $g:X\rightarrow \mathbb{R}$ by $g(x)=\max\{f(x),\epsilon\}$. Clearly $g\in C(X)$ such that $g>0$, $g\in [A_1,V_1]^+\cap\ldots\cap[A_m,V_m]^+$ and $g(x_j)=r_j$ for each $j\in J$. Hence $g\in F\subseteq W$.

 Let $B'=[C_{1}^{'}, W_{1}^{'}]^+\cap\ldots\cap[C_{l}^{'},W_{l}^{'}]^+\cap[U_{1}^{'}, s_1]^-\cap \ldots \cap [U_{q}^{'}, s_q]^-$ be a basic neighborhood of $g$ in $C_{kh}(X)$ with $B'\subseteq W$, and let $K=\{1,\ldots,q\}$.
 For each $k\in K$, choose $z_k\in U_{k}^{'}$ such that $g(z_k)=s_k$. Take $A=\{x_j:j\in J\}\cup\{z_k:k\in K\}\cup A_1\cup \ldots\cup A_m\cup C_{1}^{'}\cup\ldots\cup C_{l}^{'}$.
 For each $i\in I$, $U_i\nsubseteq A$, since $x_i$ is not a locally compact point of $X$. For each $i\in I$, let $x_{i}^{'}\in U_i\setminus A$.
 Define continuous $h:A\cup \{x_{i}^{'}:i\in I\}\rightarrow \mathbb{R}$ such that $h(x_{i}^{'})=0$ for each $i\in I$ and $h(x)=g(x)$ for each $x\in A$. Since $A\cup \{x_{i}^{'}:i\in I\}$ is compact, we can extend $h$ continuously on $X$. Then $h\in B\cap B'\subseteq B\cap W$.

 \noindent  $(c)\Rightarrow (a)$. It is enough to prove the result for subbasic open sets as the maximum or minimum of a finite number of continuous functions is continuous. So let $[U,r]^-$ be a subbasic open set in $C_{kh}(X)$ such that $\overline{U}$ is a compact subset of $X$ and $f\in [U, r]^-$. Let $x\in f^{-1}(r) \cap U$. Since $X$ is a completely regular space, there exists a continuous function $\phi : X \rightarrow [0,1]$ such that $\phi(x) = 0$ and $\phi(y) = 1$ for $y\notin U$. For each $g\in C_{kh}(X)$, define
\[\Phi(g) =\left\{\begin{array}{cc}
                  \inf\phi(g^{-1}(r)\cap U)   & g^{-1}(r)\cap U\neq \emptyset  \\
                  1 & g^{-1}(r)\cap U =\emptyset.
                \end{array}\right.
   \]
Clearly, $\Phi(f) = 0$ and $\Phi(g) = 1$ for $g\notin [U, r]^-$. Now we show that the function $\Phi:C_{kh}(X)\rightarrow [0,1]$ is continuous.

Let $g\in C_{kh}(X)$, let $t=\Phi(g)$ and let $\epsilon > 0$. Suppose first that $g^{-1}(r)\cap U\neq \emptyset$ and that $t>0$. In this case, we can assume that $t-\epsilon > 0$. Since $\overline{U}$ is compact, the set $G=[U\cap \phi^{-1}(t-\frac{\epsilon}{2},t+\frac{\epsilon}{2}),r]^-\cap[\overline{U}\cap \phi^{-1}([0,t-\frac{\epsilon}{2}]),\{r\}^c]^+$ is an open set in $C_{kh}(X)$. Now we show that $g\in G$ and  $\Phi(G)\subseteq (t-\epsilon,t+\epsilon)$. Since $\Phi(g)=t$, there exists $y\in g^{-1}(r)\cap U$ such that $t\leq \phi(y)<t+\frac{\epsilon}{2}$. Since $\phi(s) = 1$ for $s\notin U$, we must have $\phi(z)\geq t$ for all $z\in g^{-1}(r)\cap \overline{U}$. So, we have $g^{-1}(r)\cap\overline{U}\cap \phi^{-1}[0,t-\epsilon]=\emptyset$. Thus $g\in G$. For each $h\in G$, there exists $y\in U$ such that $h(y)=r$ and $\phi(y)\in(t-\frac{\epsilon}{2},t+\frac{\epsilon}{2})$, and $h(z)\neq r$ for all $z\in \overline{U}\cap \phi^{-1}[0,t-\frac{\epsilon}{2}]$. Thus $t-\frac{\epsilon}{2}\leq\inf\phi(h^{-1}(r)\cap U)\leq t+\frac{\epsilon}{2}$. Hence  $\Phi(G)\subseteq (t-\epsilon,t+\epsilon)$ for the case that $t>0$. For the case that $t=0$, take $G=[U\cap \phi^{-1}(-\frac{\epsilon}{2},\frac{\epsilon}{2}),r]^-$. Then the same proof will work as that for the case $t>0$.

On the other hand, if $g^{-1}(r)\cap U= \emptyset$, then $\Phi(g)=1$. Choose any $0<\epsilon<1$. Consider the neighborhood $(1-\epsilon,1]$ of $\Phi(g)$. Take $B=[\overline{U}\cap \phi^{-1}[0,1-\frac{\epsilon}{2}], \{r\}^c ]^+$. Now we prove that $g\in B$ and $\Phi(B)\subseteq (1-\epsilon,1]$. Since for $y\in \overline{U}\setminus U$, $\phi(y)=1$ and $g^{-1}(r)\cap U= \emptyset$, we have $\overline{U}\cap g^{-1}(r)\cap \phi^{-1}[0,1-\frac{\epsilon}{2}]=\emptyset$. Thus $g\in B$. Note that if $h\in B$, then $h(z)\neq r$ for all $z\in \overline{U}\cap \phi^{-1}[0,1-\frac{\epsilon}{2}]$. Thus for such an $h$, $1-\frac{\epsilon}{2}\leq\inf\phi(h^{-1}(r)\cap U)\leq 1$. Then this implies that $\Phi(B)\subseteq (1-\epsilon,1]$. So $\Phi$ is continuous.

Now let us consider a subbasic open set of the form $[A,V]^+$ in $C_{kh}(X)$ and $f\in [A,V]^+$. Since $C_k(X)$ is a completely regular space, there exists a continuous function $\Psi : C_{kh}(X)\rightarrow [0,1]$ such that $\Psi(f) = 0$ and $\Psi(g) = 1$ for $g\notin [A, V]^+$. By using Lemma \ref{locombaselemmaforopenpoint} and the continuity of $\Phi$ and $\Psi$, one can prove that $C_{kh}(X)$ is a completely regular space.
\end{proof}

\begin{remark}
In \cite{AMK}, it is shown that the space $C_{ph}(X)$  is completely regular if and only if $X^0$ is $G_\delta$-dense in $X$ if and only if $C_{h}(X)$ is completely regular. Since $X^0\subseteq lc(X)$, complete regularity of the space $C_{ph}(X)$ implies the complete regularity of the space $C_{kh}(X)$. But the converse need not be true. For example Theorem \ref{Tychonoffness of C_kh(X)} implies that the space $C_{kh}(\mathbb{R})$ is completely regular, while the spaces $C_h(\mathbb{R})$ and $C_{ph}(\mathbb{R})$ are not completely regular.
\end{remark}

We end this section by examining when the spaces $C_{ph}(X)$ and $C_{kh}(X)$ have the same topology; that is, when $C_{ph}(X)=C_{kh}(X)$. If $C_{p}(X)=C_{k}(X)$, then obviously $C_{ph}(X)=C_{kh}(X)$. But if $C_{ph}(X)=C_{kh}(X)$, then it is not clear whether $C_{p}(X)=C_{k}(X)$. In the next theorem, we show that it is in fact true.
\begin{theorem}
For a space $X$, the following are equivalent.
\begin{enumerate}
  \item [$(a)$] $C_{p}(X)=C_{k}(X)$
  \item [$(b)$] $C_{ph}(X)=C_{kh}(X)$
  \item [$(c)$] Every compact subset of $X$ is finite.
\end{enumerate}
\end{theorem}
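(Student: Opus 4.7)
The implications $(c)\Rightarrow(a)$ and $(a)\Rightarrow(b)$ are straightforward. For $(c)\Rightarrow(a)$, if every compact subset of $X$ is finite, then any subbasic set $[A,V]^+$ of $C_k(X)$ with $A$ compact equals the finite intersection $\bigcap_{x\in A}[x,V]^+$ of point-open subbasic sets, so $C_k(X)\leq C_p(X)$; the reverse inequality $C_p(X)\leq C_k(X)$ always holds, giving $(a)$. Then $(a)\Rightarrow(b)$ follows by taking the join of both sides with the open-point topology $h$.

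The substantive direction is $(b)\Rightarrow(c)$, which I would prove by contrapositive. Assuming that $A\subseteq X$ is compact and infinite, I will exhibit a $C_{kh}$-open set that is not $C_{ph}$-open. The natural candidate is $G=[A,(-1,1)]^+$, which is a subbasic $C_k$-open set and hence $C_{kh}$-open; it suffices to show that $G$ is not a $C_{ph}$-neighborhood of the zero function $0_X\in G$.

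So fix an arbitrary $C_{ph}$-basic neighborhood $N=\bigcap_{i=1}^{m}[x_i,V_i]^+\cap\bigcap_{j=1}^{n}[U_j,r_j]^-$ of $0_X$. Membership $0_X\in N$ forces $0\in V_i$ for each $i$ and $r_j=0$ with $U_j$ a nonempty open set for each $j$. My goal is to build $g\in N$ with $g(y)=2$ for some $y\in A$, hence $g\notin G$. Using that $A$ is infinite, I would choose $y\in A$ such that $y\notin\{x_1,\dots,x_m\}$ and $\{y\}\neq U_j$ for every $j$; this excludes only finitely many candidates. For such a $y$, each set $U_j\setminus\{y\}$ is nonempty, so I pick $w_j\in U_j\setminus\{y\}$. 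The finite set $F=\{x_1,\dots,x_m,w_1,\dots,w_n\}$ is closed in the Tychonoff space $X$ and does not contain $y$, so complete regularity yields a continuous $\phi:X\to[0,1]$ with $\phi(y)=1$ and $\phi\equiv 0$ on $F$. Then $g=2\phi$ satisfies $g(x_i)=0\in V_i$ and $g(w_j)=0$ with $w_j\in U_j$, so $g\in N$; but $g(y)=2$, so $g\notin G$.

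I expect the delicate step to be the choice of $y$: if $y$ happened to be an isolated point of $X$ whose singleton coincided with some $U_j$, the construction would be blocked since we would be forced to set $g(y)=0$. The way around it uses the standard fact that in a $T_1$ space any finite open set consists entirely of isolated points, so at most $n$ of the $U_j$'s are singletons, and the infiniteness of $A$ guarantees that an admissible $y$ survives after discarding at most $m+n$ forbidden points.
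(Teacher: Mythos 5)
Your proof is correct and follows essentially the same route as the paper: the only substantive implication $(b)\Rightarrow(c)$ is proved by showing that for an infinite compact $A$ the set $[A,V]^+$ is $kh$-open but contains no $ph$-basic neighborhood of a point, constructed via a continuous function prescribed on a suitable finite set. The paper works with an arbitrary $f\in[A,V]^+$ and perturbs it at a fresh point $x_0\in A$ (keeping the original witnesses $x_j\in U_j$, which sidesteps your singleton issue), whereas you specialize to $0_X$; both versions are sound.
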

\begin{proof}
\noindent $(a)\Rightarrow (b)$. It is immediate.

\noindent $(b)\Rightarrow (c)$. Suppose that $A$ is an infinite compact subset of $X$. Let $[A,V]^+$ be an open set in $C_{kh}(X)$, where $V$ is an open proper subset of $\mathbb{R}$. Let $f\in [A,V]^+$. Since $C_{ph}(X)= C_{kh}(X)$, there exists an open set $B=[y_1,V_1]^+\cap \ldots \cap [y_m,V_m]^+\cap [U_1,r_1]^-\cap\ldots\cap [U_n,r_n]^-$ in $C_{ph}(X)$ such that $f\in B\subseteq [A,V]^+$, where $y_i\in X$ and $V_i$ is an open subset of $\mathbb{R}$ for each $i \in \{1,\ldots,m\}$; also $U_j$ is an open subset of $X$ and $r_j$ is an element of $\mathbb{R}$ for each $j \in \{1,2,\ldots,n\}$. Since $f\in B$, there exists $x_j\in U_j$ such that $f(x_j)=r_j$ for each $j \in \{1,\ldots,n\}$ and $f(y_i)\in V_i$ for each $i \in \{1,\ldots,m\}$.

Choose $x_0\in A\setminus(\{y_1,\ldots,y_m\}\cup\{x_1,\ldots,x_n\})$ and $r\in \mathbb{R}\setminus V$. Take $S=\{y_i:1\leq i\leq m\}\cup\{x_j:1\leq j\leq n\}\cup \{x_0\}$. Since $S$ is a compact subset of $X$, there exists $g\in C(X)$ such that $g(x_0)=r$; and $g(y_i)\in V_i$ for each $i \in \{1,\ldots,m\}$ and $g(x_j)=r_j$ for each $j \in \{1,\ldots,n\}$. Thus $g\in B$ but $g\notin [A,V]^+$.

\noindent $(c)\Rightarrow (a)$. It is well-known.
\end{proof}

\begin{theorem}
For a space $X$, the space $C_h(X)$ can never be weaker than the space $C_k(X)$.
\end{theorem}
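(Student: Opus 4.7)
The plan is to exhibit a single $h$-subbasic open set that fails to be open in the compact-open topology, which will establish that the topology $h$ is not contained in the topology $k$. I would first dispose of the trivial case $X = \emptyset$, where $C(X)$ consists of a single element and both topologies coincide. So assume $X$ is nonempty, fix any nonempty open $U \subseteq X$ and any $r \in \mathbb{R}$, and consider the $h$-subbasic open set $[U,r]^-$.

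The constant function $f \equiv r$ is a natural test point: it lies in $[U,r]^-$ because $U \neq \emptyset$. I would assume for contradiction that $[U,r]^-$ is $k$-open, so that there is a basic $k$-neighborhood $N = [A_1,V_1]^+ \cap \cdots \cap [A_n,V_n]^+$ of $f$ with $N \subseteq [U,r]^-$. Since $f \in N$, each $V_i$ contains $r$, hence $V := V_1 \cap \cdots \cap V_n$ is an open subset of $\mathbb{R}$ containing $r$, and in particular $V \setminus \{r\} \neq \emptyset$.

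To reach the contradiction, I would pick any $s \in V \setminus \{r\}$ and consider the constant function $g \equiv s$. Then $g(A_i) = \{s\} \subseteq V_i$ for each $i$, so $g \in N$; on the other hand $g^{-1}(r) = \emptyset$, so $g \notin [U,r]^-$. This contradicts $N \subseteq [U,r]^-$, so $[U,r]^-$ cannot be $k$-open, proving $C_h(X) \not\leq C_k(X)$.

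There is essentially no obstacle in this argument: the observation that constant functions supply a counterexample makes the proof a one-line contradiction once the setup is in place. The only minor subtlety is noticing that each $V_i$ must contain $r$ (because $f \equiv r$ lies in $[A_i,V_i]^+$), from which the existence of a nearby constant $s \in V \setminus \{r\}$ is immediate.
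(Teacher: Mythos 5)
Your proof is correct and is essentially the same argument as the paper's: both test the $h$-subbasic set $[U,r]^-$ at the constant function with value $r$ and observe that every basic $k$-neighborhood of it contains a nearby constant function $g\equiv s$ with $s\neq r$, which misses $[U,r]^-$ (the paper fixes $r=0$ and uses $\epsilon$-neighborhoods on compacta, but that is only a cosmetic difference). The only bookkeeping you elide is the degenerate factors with $A_i=\emptyset$, for which $[A_i,V_i]^+=C(X)$ and which can simply be discarded.
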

\begin{proof}
Now $[U,0]^-$ is an open set in $C_h(X)$ containing the constant zero-function $0_X$. Let $B=\langle0_X,A,\epsilon\rangle=\{f\in C(X):|f(x)|<\epsilon \mbox{ for all } x\in A\}$ be any basic neighborhood of $0_X$ in $C_k(X)$, where $A$ is a compact subset of $X$ and $\epsilon>0$. Take $f\in C(X)$ such that $f(x)=\frac{\epsilon}{2}$ for all $x\in X$. Then $f\in B$ but $f\notin [U,0]^-$. So $[U,0]^-$ cannot be open in $C_k(X)$.
\end{proof}
So the space $C_k(X)$ is either strictly weaker than the space $C_h(X)$ or there is no relation. In our next theorem we show that the space $C_k(X)$ is strictly weaker than $C_h(X)$ if and only if $X$ is a discrete space.
\begin{theorem}\label{comparison of topologiesCkh(X)}
For a space $X$, the following are equivalent.
\begin{enumerate}
  \item [$(a)$] $C_k(X)< C_h(X)$.
  \item [$(b)$] $C_{kh}(X)= C_h(X)$.
  \item [$(c)$] $X$ is a discrete space.
\end{enumerate}
\end{theorem}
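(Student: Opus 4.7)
The plan is to split the theorem into the formal equivalence $(a)\Leftrightarrow(b)$ and the substantive equivalence $(b)\Leftrightarrow(c)$. For the first, I would note that since $C_{kh}(X)$ is by definition the join of $C_k(X)$ and $C_h(X)$, the equality $C_{kh}(X)=C_h(X)$ holds precisely when $C_k(X)\leq C_h(X)$. The previous theorem already tells us $C_h(X)$ is never weaker than $C_k(X)$, so $C_k(X)=C_h(X)$ is impossible, and $C_k(X)\leq C_h(X)$ is equivalent to $C_k(X)<C_h(X)$. That settles $(a)\Leftrightarrow(b)$.

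For $(c)\Rightarrow(b)$: assume $X$ is discrete. Then every compact subset of $X$ is finite, so $C_k(X)=C_p(X)$. It therefore suffices to show $C_p(X)\leq C_h(X)$. I would do this by showing that each subbasic set $[x,V]^+=\{f:f(x)\in V\}$ of $C_p(X)$ is open in $C_h(X)$. Because $X$ is discrete, $\{x\}$ is open, so one can write
\[
[x,V]^+=\bigcup_{r\in V}\{f\in C(X):f(x)=r\}=\bigcup_{r\in V}[\{x\},r]^-,
\]
which is a union of subbasic open sets of $C_h(X)$. Hence $C_{kh}(X)=C_k(X)\vee C_h(X)=C_p(X)\vee C_h(X)\leq C_h(X)$, so $C_{kh}(X)=C_h(X)$.

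For $(b)\Rightarrow(c)$, I would argue by contrapositive: assume $X$ is not discrete and exhibit a set that is open in $C_{kh}(X)$ but not in $C_h(X)$. Pick a non-isolated point $x_0\in X$ and consider $[\{x_0\},(-1,1)]^+$, which is open in $C_{kh}(X)$ and contains $0_X$. Suppose toward contradiction that it is open in $C_h(X)$; then there is a basic neighborhood $B=[U_1,r_1]^-\cap\cdots\cap[U_n,r_n]^-$ of $0_X$ with $B\subseteq[\{x_0\},(-1,1)]^+$. Since $0_X\in B$, every $r_i=0$ and every $U_i$ is nonempty. For each $i$, choose $x_i\in U_i\setminus\{x_0\}$, which is possible because if $x_0\in U_i$ then $U_i$ is a neighborhood of the non-isolated point $x_0$ and hence meets $X\setminus\{x_0\}$. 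Since $X$ is Tychonoff, there is a continuous $f:X\to\mathbb{R}$ with $f(x_0)=2$ and $f(x_i)=0$ for $i=1,\ldots,n$; explicitly, one can take $f=2\phi_1\cdots\phi_n$, where $\phi_i:X\to[0,1]$ is continuous with $\phi_i(x_0)=1$ and $\phi_i(x_i)=0$. Then $f\in B$ but $f\notin[\{x_0\},(-1,1)]^+$, the desired contradiction.

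The main obstacle in the whole argument is the choice of the points $x_i$ in $(b)\Rightarrow(c)$: the construction of $f$ breaks down if any $x_i$ is forced to equal $x_0$, and avoiding this is exactly where non-isolatedness of $x_0$ is used. Once that choice is available, Tychonoff separation supplies $f$ routinely.
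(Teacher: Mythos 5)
Your proof is correct and takes essentially the same route as the paper's: the non-discrete case is handled by the same witness $[\{x_0\},(-1,1)]^+$ at a non-isolated point $x_0$ together with a Urysohn function vanishing at chosen points $x_i\in U_i\setminus\{x_0\}$, and the discrete case by expressing point-evaluation sets through $h$-subbasic sets of the form $[\{x\},r]^-$. The only cosmetic difference is organizational: you prove $(a)\Leftrightarrow(b)$ and $(b)\Leftrightarrow(c)$ and invoke the preceding theorem ($C_h(X)$ is never weaker than $C_k(X)$) to upgrade $C_k(X)\leq C_h(X)$ to strict inequality in $(b)\Rightarrow(a)$, whereas the paper runs the cycle $(a)\Rightarrow(b)\Rightarrow(c)\Rightarrow(a)$ and uses that theorem inside $(c)\Rightarrow(a)$.
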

\begin{proof}
\noindent $(a)\Rightarrow (b)$. This follows from $C_h(X)\leq C_{kh}(X)$.

\noindent $(b)\Rightarrow (c)$. Suppose that $X$ is not discrete. Then $X$ has some non-isolated point $x_0$. Now $[x_0,(-1,1)]^+$ is an open neighborhood of the constant zero-function $0_X$  in $C_{kh}(X)$. To show that $[x_0,(-1,1)]^+$ is not a neighborhood of $0_X$ in $C_h(X)$ let $$B=[U_1,0]^-\cap\ldots\cap[U_n,0]^-$$ be any basic neighborhood of $0_X$ in $C_h(X)$. Since $x_0$ is non-isolated, there exists $x_i\in U_i\setminus\{x_0\}$ for each $1\leq i\leq n$. Let $g\in C(X)$ be such that $g(x_0)=1$ and $g(x)=0$ for all $x\in \{x_1,\ldots,x_n\}$. Then $g\in B$ but $g\notin [x_0,(-1,1)]^+$. So $[x_0,(-1,1)]^+$ cannot be open in $C_h(X)$.

\noindent $(c)\Rightarrow (a)$. Suppose that $X$ is discrete. Take any subbasic open set $[A,V]^+=\{f\in C(X):f(A)\subseteq V\}$ in $C_k(X)$, where $A$ is a compact set in $X$ and $V$ is an open set in $\mathbb{R}$. Since $X$ is discrete, $A$ is a finite set. So for any $f\in [A,V]^+$, $\cap_{x\in A} [x,f(x)]^-$ is an open set in $C_h(X)$ containing $f$ and $\cap_{x\in A} [x,f(x)]^-\subseteq [A,V]^+$. Hence $[A,V]^+$ is open in $C_h(X)$.
\end{proof}

\section{Submetrizability and first countability of $C_{kh}(X)$}
In this section, we study the submetrizability and first countability of the space $C_{kh}(X)$. A submetrizable space is one that admits a weaker metrizable topology, or equivalently, one having a continuous injection mapping it into a metric space.

\begin{remark}
\end{remark}
\begin{enumerate}
\item Every pseudocompact set in a submetrizable space is a $G_\delta$-set. In particular all compact subsets, countably compact subsets and the singletons are $G_\delta$-sets in a submetrizable space.
\item If a space X has a $G_\delta$-diagonal, that is, if the set $\{(x, x) : x\in X\}$ is a $G_\delta$-set in the product space $X\times X$, then every point in $X$ is a $G_\delta$-set. Note that
every metrizable space has a $G_\delta$-diagonal. Consequently every submetrizable
space has a $G_\delta$-diagonal. For more details on submetrizable spaces, see \cite{G}.
\end{enumerate}

\begin{theorem}\label{submetrizability of C_kh(X)}
For a space $X$, the following are equivalent.
\begin{enumerate}
\item [$(a)$] $C_{kh}(X)$ is submetrizable.
\item [$(b)$] $C_{kh}(X)$ has a $G_\delta$-diagonal.
\item [$(c)$] Each singleton set in $C(X)$ is a $G_\delta$-set in $C_{kh}(X)$.
\item [$(d)$] $\{0_X\}$ is a $G_\delta$-set in $C_{kh}(X)$.
\item [$(e)$] $X$ is  almost $\sigma$-compact.
\item [$(f)$] $C_k(X)$ is submetrizable.
\end{enumerate}
\end{theorem}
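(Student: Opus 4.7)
The plan is to close the ring of implications $(a) \Rightarrow (b) \Rightarrow (c) \Rightarrow (d) \Rightarrow (e) \Rightarrow (f) \Rightarrow (a)$. The first three should follow immediately from the Remark preceding the theorem: submetrizability gives a $G_\delta$-diagonal, a $G_\delta$-diagonal forces every singleton to be a $G_\delta$-set, and $(d)$ is the instance of $(c)$ at the zero function. The implication $(f) \Rightarrow (a)$ is equally direct: any continuous injection of $C_k(X)$ into a metric space is automatically continuous from the finer topology of $C_{kh}(X)$, so the same map witnesses submetrizability of $C_{kh}(X)$.

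For $(e) \Rightarrow (f)$, I would write a dense $\sigma$-compact subset of $X$ as $Y = \bigcup_n K_n$ with each $K_n$ compact, and form the restriction map $\rho \colon C_k(X) \to \prod_n C(K_n)$, $f \mapsto (f|_{K_n})_n$. Continuity is immediate from the defining subbasic open sets of the compact-open topology, injectivity follows from density of $Y$ together with the continuity of the functions, and the codomain is a countable product of Banach spaces, hence metrizable. This realizes $C_k(X)$ as submetrizable.

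The main step, where I expect to invest the most work, is $(d) \Rightarrow (e)$. Starting from a representation $\{0_X\} = \bigcap_n G_n$ by basic open sets of Proposition 2.1, membership of $0_X$ in $G_n$ forces each scalar $r$ in an open-point component $[U,r]^-$ to be $0$ (since $0_X^{-1}(r) = X$ only when $r = 0$) and forces $0$ into every $V$ coming from a compact-open component $[A,V]^+$. By intersecting each $G_n$ with the tighter neighborhood $[K_n, (-1/n, 1/n)]^+$ of $0_X$, where $K_n$ is the union of the compact sets appearing in $G_n$, one extracts a $\sigma$-compact set $K_\infty := \bigcup_n K_n$ with the property that every $f \in \bigcap_n G_n$ vanishes identically on $K_\infty$. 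The next goal is to show $K_\infty$ is dense in $X$, which is the definition of almost $\sigma$-compactness.

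To establish density I would argue by contradiction: assume $W := X \setminus \overline{K_\infty}$ is a nonempty open set, and use the Tychonoff property of $X$ to build a continuous $f \colon X \to [0,1]$ that vanishes on $\overline{K_\infty}$ and equals $1$ at a carefully chosen point $x_0 \in W$. Such an $f$ automatically satisfies every compact-open constraint in the $G_n$'s, and also every $[U,0]^-$ constraint whose $U$ meets $\overline{K_\infty}$. The hard part will be to ensure that $f$ also has a zero in each open set $U$ from a residual $[U,0]^-$ component that lies entirely inside $W$; I anticipate resolving this by selecting $x_0$ and the support of $f$ delicately so that these countably many ``interior'' $U$'s either miss the support of $f$ or still contain a zero of $f$, possibly through a diagonal-type construction over the countable family. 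Once such an $f$ is produced it lies in $\bigcap_n G_n \setminus \{0_X\}$, contradicting $(d)$, so $K_\infty$ must be dense.
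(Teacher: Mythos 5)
The implications $(a)\Rightarrow(b)\Rightarrow(c)\Rightarrow(d)$ and $(f)\Rightarrow(a)$ are handled exactly as in the paper, and your restriction-map proof of $(e)\Rightarrow(f)$ is correct (the paper simply cites that equivalence as known). The genuine gap is in $(d)\Rightarrow(e)$, and it is precisely the step you leave open. You take $K_\infty$ to be the union of the compact sets appearing in the $[A,V]^+$ components of the $G_n$ and then aim to prove that $K_\infty$ is dense. That intermediate goal is false in general: the open-point components $[U,0]^-$ contribute nothing to your $K_\infty$, and a representation of $\{0_X\}$ may consist entirely of such components. For instance, for $X=\mathbb{N}$ discrete, $\{0_X\}=\bigcap_n[\{n\},0]^-$, so your $K_\infty$ is empty. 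Consequently no amount of care in choosing $x_0$ and the support of $f$ will close the contradiction argument: the countably many ``interior'' sets $U$ lying in $X\setminus\overline{K_\infty}$ are a real obstruction, not a technicality to be handled by a diagonal construction.

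The missing idea, which is the one the paper uses, is to enlarge the candidate $\sigma$-compact set before testing density: for each open-point component $[U_{n_j},0]^-$ of $W_n$ fix a single point $x_{n_j}\in U_{n_j}$, and set $K_n=\bigcup_i A_{n_i}\cup\{x_{n_1},\ldots,x_{n_{t_n}}\}$ (still compact) and $\mathcal{A}=\bigcup_n K_n$. If $\mathcal{A}$ were not dense, a Tychonoff function $f$ with $f\equiv 0$ on $\overline{\mathcal{A}}$ and $f(x_0)=1$ for some $x_0\notin\overline{\mathcal{A}}$ would lie in every $W_n$ --- it vanishes at each chosen $x_{n_j}\in U_{n_j}$, so the open-point constraints hold automatically, and $0\in V_{n_i}$ takes care of the compact-open ones --- contradicting $\{0_X\}=\bigcap_n W_n$. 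So $\mathcal{A}$ itself is the dense $\sigma$-compact set; one cannot reduce to the union of the compact parts alone. (A minor further point: your auxiliary claim that every $f$ in the intersection vanishes on $K_\infty$ after intersecting $G_n$ with $[K_n,(-1/n,1/n)]^+$ also does not follow as stated, since a point of $K_1\setminus\bigcup_{n\ge 2}K_n$ only receives the bound $|f|<1$; you would need $[K_1\cup\cdots\cup K_n,(-1/n,1/n)]^+$. But that claim is not needed once the argument above is in place.)
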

\begin{proof}
\noindent $(a)\Rightarrow (b)\Rightarrow (c)$ and $(e)\Leftrightarrow (f)$. These are well known.

\noindent $(c)\Rightarrow (d)$. It is immediate.

\noindent $(d)\Rightarrow (e)$. Since $0_X\in C(X)$ is a $G_\delta$-set in $C_{kh}(X)$, there exists a countable family $\gamma=\{W_n:n\in \mathbb{N}\}$ of open sets in $C_{kh}(X)$ such that $\{0_X\}=\cap \gamma$. We can assume for each $n\in \mathbb{N}$, $W_n$ is of the form $[A_{n_1},V_{n_1}]^+\cap\ldots\cap[A_{n_{m_n}},V_{n_{m_n}}]^+\cap[U_{n_1},0]^-\cap\ldots\cap[U_{n_{t_n}},0]^-$, where for $1\leq i\leq m_n$, $A_{n_i}$ is compact in $X$ and $V_{n_i}$ is open in $\mathbb{R}$, and for $1\leq j\leq t_n$, $U_{n_j}$ is open in $X$ and $0\in\mathbb{R}$, and $\overline{U_{n_j}}\cap \overline{U_{n_k}}=\emptyset$ for $j\neq k$. For each $W_n\in \gamma$, fix $x_{n_i}\in U_{n_i}$ for $1\leq i\leq t_n$ and put $K_n=\cup_{i=1}^{m_n}A_{n_i}\cup\cup_{j=1}^{t_n}\{x_{n_j}\}$. For each $n\in \mathbb{N}$, $K_n$ is compact.

Let $\mathcal{A}= \cup\{K_n:n\in \mathbb{N}\}$. Suppose that $\overline{\mathcal{A}}\neq X$. So there exists $x_0\in X\setminus \overline{\mathcal{A}}$. Since $X$ is a completely regular space, there exists $f\in C(X)$ such that $f(x_0)=1$ and $f(y)=0$, for all $y\in \overline{\mathcal{A}}$. Therefore $f\in W_n$ for each $n\in \mathbb{N}$. So $f=0_X$, but $f(x_0)=1$. Therefore $\mathcal{A}$ is dense in $X$. Hence $X$ is almost $\sigma$-compact.

\noindent $(f)\Rightarrow (a)$. This follows from $C_k(X)\leq C_{kh}(X)$.
\end{proof}
Our next theorem gives a necessary condition for the first countability of the space $C_{kh}(X)$.
\begin{theorem}
If $C_{kh}(X)$ has a countable local base at the constant zero-function $0_X$, then $X$ is hemicompact.
\end{theorem}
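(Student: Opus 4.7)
The plan is to pull a hemicompact exhaustion of $X$ out of the given countable local base at $0_X$. Let $\{W_n:n\in\mathbb{N}\}$ be such a base. By Proposition \ref{base for Ckh(X)}, I may replace each $W_n$ by a basic open set of the form
\[W_n=[A_{n,1},V_{n,1}]^+\cap\cdots\cap[A_{n,m_n},V_{n,m_n}]^+\cap[U_{n,1},r_{n,1}]^-\cap\cdots\cap[U_{n,t_n},r_{n,t_n}]^-,\]
where the $A_{n,i}$ are compact, $V_{n,i}\in\mathcal{V}$, $U_{n,j}\in\mathcal{U}$, $r_{n,j}\in\mathbb{R}$, and $\overline{U_{n,i}}\cap\overline{U_{n,j}}=\emptyset$ whenever $i\neq j$. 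Since $0_X\in W_n$, each $V_{n,i}$ contains $0$; and since $0_X$ takes only the value $0$, each $r_{n,j}$ must equal $0$.

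Set $E_n=\{x\in X:U_{n,j}=\{x\}\text{ for some }j\}$; this is a finite set of isolated points (size at most $t_n$), and I propose the compact sets
\[K_n=A_{n,1}\cup\cdots\cup A_{n,m_n}\cup E_n.\]
The heart of the argument is to show that every compact $K\subseteq X$ is contained in some $K_n$. Given such a $K$, the set $[K,(-1,1)]^+$ is an open neighborhood of $0_X$ and therefore contains some $W_n$. Assume toward contradiction that $K\not\subseteq K_n$, and pick $y\in K\setminus K_n$. The goal is to construct $f\in W_n$ with $f(y)=2$, contradicting $W_n\subseteq[K,(-1,1)]^+$.

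Because the closures $\overline{U_{n,j}}$ are pairwise disjoint, $y$ lies in at most one of the $U_{n,j}$. For each $j$ I pick $p_j\in U_{n,j}$ with $p_j\neq y$: if $y\notin U_{n,j}$, any $p_j\in U_{n,j}$ works; if $y\in U_{n,j}$, then $U_{n,j}\neq\{y\}$ (otherwise $y\in E_n\subseteq K_n$), so some $p_j\in U_{n,j}\setminus\{y\}$ exists. Then $B=A_{n,1}\cup\cdots\cup A_{n,m_n}\cup\{p_1,\ldots,p_{t_n}\}$ is a closed subset of $X$ not containing $y$, so the complete regularity of $X$ yields a continuous $f\colon X\to[0,2]$ with $f\vert_{B}\equiv 0$ and $f(y)=2$. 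This $f$ lies in $W_n$ because $f(A_{n,i})=\{0\}\subseteq V_{n,i}$ and $p_j\in f^{-1}(0)\cap U_{n,j}$, while $f(y)=2\notin(-1,1)$ supplies the contradiction.

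The one delicate point is the possibility that some $U_{n,j}$ equals a singleton $\{x\}$ at an isolated point $x$: then every $f\in W_n$ is forced to vanish at $x$, and the separation construction collapses precisely when $y=x$. Including the finite set $E_n$ inside $K_n$ is exactly the device that absorbs these exceptional points, after which the remainder of the proof is routine Tychonoff separation.
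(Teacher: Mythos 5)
Your proof is correct and follows essentially the same route as the paper's: reduce to basic neighborhoods with all $r_{n,j}=0$, assemble compact sets $K_n$ from the data of $W_n$, and use the neighborhood $[K,(-1,1)]^+$ of $0_X$ together with Tychonoff separation to force $K\subseteq K_n$. The only cosmetic difference is that the paper fixes one point $x_{n,j}\in U_{n,j}$ in advance and puts it into $K_n$, whereas you put into $K_n$ only the singleton $U_{n,j}$'s and choose the witnesses $p_j$ adaptively so as to avoid $y$; both devices dispose of the same obstruction.
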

\begin{proof}
Let $\mathcal{W}=\{W_n:n\in \mathbb{N}\}$ be a countable base at $0_X$ in $C_{kh}(X)$. We can assume that for each $n\in \mathbb{N}$, $W_n=[A_{n_1},V_{n_1}]^+\cap\ldots\cap[A_{n_{k_n}},V_{n_{k_n}}]^+\cap[U_{n_1},0]^-\cap\ldots\cap[U_{n_{t_n}},0]^-$. Choose $x_{n_i}\in U_{n_i}$ for $1\leq i\leq t_n$ and put $K_n=\cup_{i=1}^{k_n}A_{n_i}\cup\cup_{j=1}^{t_n}\{x_{n_j}\}$. For each $n\in \mathbb{N}$, $K_n$ is compact.

If $K$ is any compact subspace of $X$, we wish to show that $K\subseteq K_n$ for some $n\in \mathbb{N}$. Now $W=[K,(-1,1)]^+$ is an open set in $C_{kh}(X)$ containing $0_X$. So there exists $W_m\in\mathcal{W}$ such that $W_m\subseteq W$ for some $m\in \mathbb{N}$. Now we show that $K \subseteq K_m$.  Suppose that there exists $x\in K\setminus K_m$. Since $X$ is Tychonoff, there exists $f\in C(X)$ such that $f(x)=1$ and $f(y)=0$ for all $y\in K_m$. Then $f\notin W$ but $f\in W_m$. So our supposition is wrong. Thus $K\subseteq K_m$ and hence $X$ is hemicompact.
\end{proof}

\begin{corollary}\label{Ckh(X) first countable and hemicompact}
If $C_{kh}(X)$ is first countable, then $X$ is hemicompact.
\end{corollary}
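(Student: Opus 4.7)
The plan is essentially to observe that this corollary is an immediate specialization of the preceding theorem. First countability of $C_{kh}(X)$ means, by definition, that every point of $C_{kh}(X)$ has a countable local base. In particular, the constant zero-function $0_X$, which lies in $C(X)$, has a countable local base in $C_{kh}(X)$. That is exactly the hypothesis of the previous theorem, so its conclusion yields that $X$ is hemicompact.

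Thus the entire proof consists of one sentence invoking the preceding theorem with the point $0_X$. There is no real obstacle here; the work has already been done, and the corollary simply packages the most natural consequence of the theorem in the more familiar language of first countability (rather than ``countable local base at the specific point $0_X$''). No additional hypotheses on $X$ are needed, since $X$ has been Tychonoff throughout the paper and the theorem was proved in that generality.
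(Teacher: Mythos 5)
Your proposal is correct and matches the paper's (implicit) argument: the corollary is stated without proof precisely because it follows immediately from the preceding theorem applied at the point $0_X$. Nothing more is needed.
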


\begin{theorem}
If $X$ is a hemicompact metric space, then the constant zero-function $0_X$ has a countable base in $C_{kh}(X)$.
\end{theorem}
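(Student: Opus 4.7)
The plan is to produce an explicit countable local base at $0_X$ by combining hemicompactness of $X$ with the second countability that follows from $X$ being metric and hemicompact (note that every singleton is compact in a Hausdorff space, so a hemicompact exhaustion covers $X$, making $X$ $\sigma$-compact and hence, being metric, Lindel\"of and second countable). Fix a countable family $\{K_n : n \in \mathbb{N}\}$ of compact subsets of $X$ witnessing hemicompactness, so that every compact $K \subseteq X$ is contained in some $K_n$, and fix a countable base $\mathcal{U}$ for $X$ consisting of nonempty open sets.

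Propose the countable collection
$$\mathcal{B} = \left\{\, [K_n,(-1/k,1/k)]^{+} \cap [U_{i_1},0]^{-} \cap \cdots \cap [U_{i_s},0]^{-} \,:\, n,k,s \in \mathbb{N},\ U_{i_1},\ldots,U_{i_s} \in \mathcal{U} \,\right\}.$$
Each such set is open in $C_{kh}(X)$ and contains $0_X$: we have $0 \in (-1/k,1/k)$, and each nonempty base element $U_{i_j}$ gives $0_X^{-1}(0) \cap U_{i_j} = U_{i_j} \neq \emptyset$. Countability of $\mathcal{B}$ is immediate from the countability of $\mathcal{U}$.

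The verification that $\mathcal{B}$ is a local base at $0_X$ is then a straightforward matching argument. Take any basic neighborhood $W = \bigcap_{i=1}^{m}[A_i,V_i]^{+} \cap \bigcap_{j=1}^{n}[U_j,r_j]^{-}$ of $0_X$. Membership of $0_X$ forces $0 \in V_i$ for every $i$, and also $r_j = 0$ for every $j$ (since $0_X^{-1}(r_j) \cap U_j \neq \emptyset$ is only possible when $r_j = 0$). Use hemicompactness to pick $N$ with $A_1 \cup \cdots \cup A_m \subseteq K_N$, and pick $k$ with $(-1/k,1/k) \subseteq V_i$ for all $i$; this yields $[K_N,(-1/k,1/k)]^{+} \subseteq \bigcap_{i}[A_i,V_i]^{+}$. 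For each $j$, pick $U_{i_j} \in \mathcal{U}$ with $U_{i_j} \subseteq U_j$, whence $[U_{i_j},0]^{-} \subseteq [U_j,0]^{-}$. Intersecting produces an element of $\mathcal{B}$ inside $W$.

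The only real observation making this go cleanly, and the only thing one should flag, is that at $0_X$ every open-point subbasic factor collapses to the form $[U_j,0]^{-}$; this is precisely what allows us to shrink each $U_j$ to a member of the countable base $\mathcal{U}$ without having to consider a dense set of real values as in the general case. There is no substantive obstacle beyond this.
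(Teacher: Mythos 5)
Your proposal is correct and follows essentially the same route as the paper: use second countability of the hemicompact metric space $X$ together with a hemicompact exhaustion $\{K_n\}$ and a countable local base at $0\in\mathbb{R}$, observe that every open-point factor at $0_X$ must have $r_j=0$, and then match an arbitrary basic neighborhood of $0_X$ term by term. The only (immaterial) difference is that you consolidate the compact-open part into a single set $[K_N,(-1/k,1/k)]^{+}$, whereas the paper keeps one $[K_{k_i},V_{k_i}]^{+}$ per original factor.
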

\begin{proof}
Let $(K_n)$ be a sequence of compact sets in $X$ such that every compact set in $X$ is contained in $K_m$ for some $m\in \mathbb{N}$. Let $\mathcal{K}=\{K_n:n\in \mathbb{N}\}$. Since $X$ is a hemicompact metric space, $X$ is second countable. Let $\mathcal{B}$ be a countable base for $X$ and $\mathcal{V}$ be a countable local base at $0\in \mathbb{R}$. Now consider the collection $\mathcal{F}$ of open sets in $C_{kh}(X)$ containing $0_X$ of the form $$[K_{n_1},V_{n_1}]^+\cap\ldots\cap[K_{n_k},V_{n_k}]^+\cap[U_{n_1},0]^-\cap\ldots\cap[U_{n_t},0]^-,$$ where $K_i\in\mathcal{K}$, $V_i\in\mathcal{V}$ and $U_j\in \mathcal{B}$ for each $i\in \{n_1,\ldots, n_k\}$ and $j\in \{n_1,\ldots, n_t\}$. Clearly $\mathcal{F}$ is a countable family. Now we show that $\mathcal{F}$ forms a base at $0_X$.

Let $G=[A_1,W_1]^+\cap\ldots\cap[A_m,W_m]^+\cap[U_1,0]^-\cap\ldots\cap[U_n,0]^-$ be any basic open set in $C_{kh}(X)$ containing $0_X$, where $A_i$ is a compact subset of $X$, $W_i$ is open in $\mathbb{R}$ and $U_j\in \mathcal{B}$, $0\in \mathbb{R}$ for each $i\in \{1,\ldots,m\}$, $j\in \{1,\ldots,n\}$ and $\overline{U_j}\cap \overline{U_k}=\emptyset$ for $j\neq k$. Since $0_X\in G$, for each $i\in \{1,\ldots,m\}$, we have $0\in W_i$. Since $X$ is hemicompact and $\mathcal{V}$ is a countable local base at $0\in \mathbb{R}$, for each $1\leq i\leq m$, there exist $K_{k_i}\in \mathcal{K}$ and $V_{k_i}\in \mathcal{V}$ such that $A_i\subseteq K_{k_i}$ and $V_{k_i}\subseteq W_i$. Clearly $0_X\in [K_{k_1},V_{k_1}]^+\cap\ldots\cap[K_{k_m},V_{k_m}]^+\cap[U_1,0]^-\cap\ldots\cap[U_n,0]^-\subseteq G$.
\end{proof}
Now Corollary \ref{Ckh(X) first countable and hemicompact} implies that $X$ being hemicompact is a necessary condition for the first countability of the space $C_{kh}(X)$. In our next theorem, we show that $X$ being hemicompact is also a sufficient condition for the first countability of the space $C_{kh}(X)$ whenever $X$ is a locally connected metric space.

\begin{theorem}
If $X$ is a locally connected metric space, then the following are equivalent.
\begin{enumerate}
  \item [$(a)$] $X$ is a hemicompact space.
  \item [$(b)$] $C_{kh}(X)$ is first countable.
\end{enumerate}
\end{theorem}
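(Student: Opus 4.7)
The direction $(b)\Rightarrow(a)$ is immediate from Corollary~\ref{Ckh(X) first countable and hemicompact}, so the content is in $(a)\Rightarrow(b)$. My plan is to produce, for each $f \in C(X)$, a countable local base at $f$ in $C_{kh}(X)$. Since $X$ is hemicompact and metric, fix an exhausting sequence $(K_n)$ of compact subsets of $X$; since such an $X$ is $\sigma$-compact metric it is second countable, and combined with local connectedness this lets me choose a countable base $\mathcal{B}$ of $X$ consisting of connected open sets.

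The compact-open contribution will be handled by the standard tubes $T(n,m) := \{g \in C(X) : |g(x) - f(x)| < 1/m \text{ for all } x \in K_n\}$, which form a countable local base at $f$ in the topological group $C_k(X)$ (by hemicompactness) and hence are open in the finer topology $C_{kh}(X)$. For the open-point contribution, the trick is to enlarge the allowed real parameters beyond $\mathbb{Q}$: set
$$R_f := \mathbb{Q} \cup \{\inf f(B) : B \in \mathcal{B}\} \cup \{\sup f(B) : B \in \mathcal{B}\}$$
(discarding any infinite entries), which is countable, and let $\mathcal{E}_f$ be the countable family of finite intersections $[B_1,r_1]^- \cap \ldots \cap [B_k,r_k]^-$ with $B_j \in \mathcal{B}$, $r_j \in R_f$, that contain $f$. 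The candidate local base at $f$ is $\mathcal{F}_f := \{T(n,m) \cap E : n,m \in \mathbb{N},\ E \in \mathcal{E}_f\}$, which is countable.

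To verify that $\mathcal{F}_f$ is a local base, take a basic neighborhood $G = \bigcap_i [A_i, W_i]^+ \cap \bigcap_l [U_l, r_l]^-$ of $f$. The $C_k$-open piece $\bigcap_i [A_i, W_i]^+$ absorbs some $T(n,m)$ around $f$. For each $l$, pick $x_l \in U_l$ with $f(x_l) = r_l$ and a connected $B_l \in \mathcal{B}$ with $x_l \in B_l \subseteq U_l$; then $f(B_l)$ is an interval in $\mathbb{R}$ containing $r_l$. If $r_l$ lies in the interior of $f(B_l)$, pick rationals $q_1 < r_l < q_2$ both in $f(B_l)$ and set $E_l := [B_l,q_1]^- \cap [B_l,q_2]^-$; any $g \in E_l$ has $g(B_l)$ connected and containing $q_1$ and $q_2$, whence $r_l \in g(B_l)$ by the intermediate value theorem, so $E_l \subseteq [U_l, r_l]^-$. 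Otherwise $r_l$ is an endpoint of $f(B)$ for every such connected $B$, i.e.\ $r_l$ is a local extremum value of $f$ at $x_l$; in particular $r_l = \inf f(B_l)$ or $r_l = \sup f(B_l)$ for some suitable $B_l \in \mathcal{B}$ with $x_l \in B_l \subseteq U_l$, so $r_l \in R_f$ and $E_l := [B_l, r_l]^-$ works directly.

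The main obstacle is this second case when $r_l$ is irrational: a base built from only rational $r$-parameters would fail to dominate $[U,r]^-$ at a local extremum of $f$ whose value is irrational. The resolution is the observation that the local extremum values of $f$ form a countable set, because each such value equals $\inf f(B)$ or $\sup f(B)$ for some $B$ in the countable base $\mathcal{B}$. Local connectedness is essential throughout: it supplies a countable base of connected open sets on which $f$ has interval image, and this interval structure powers both the IVT trick in the \emph{interior} subcase and the endpoint/extremum dichotomy in the other.
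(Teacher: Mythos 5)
Your proof is correct, and it takes a genuinely different, self-contained route where the paper only cites. The paper disposes of $(a)\Rightarrow(b)$ in one line: hemicompactness gives first countability of $C_k(X)$, and the open-point half is delegated to Corollary 4.14 of \cite{AMK2}, the two halves combining because the join of two first countable topologies is first countable. You reconstruct the delegated half explicitly: a countable base $\mathcal{B}$ of connected open sets (available since hemicompact metric implies second countable, and local connectedness makes components of basic open sets open), the intermediate-value trick $[B,q_1]^-\cap[B,q_2]^-\subseteq[B,r]^-$ for rationals $q_1<r<q_2$ in $f(B)$, and the key observation that the only parameters $r$ this trick can miss are the countably many values $\inf f(B)$, $\sup f(B)$ for $B\in\mathcal{B}$, which you adjoin to $\mathbb{Q}$ to form the countable set $R_f$. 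Together with the tubes $T(n,m)$, which are open in $C_{kh}(X)$ because $C_k(X)\leq C_{kh}(X)$ and form a local base at $f$ for the $k$-part by hemicompactness, this yields a countable local base at each $f$. The argument is sound; the one spot to state more cleanly is the dichotomy: for a fixed connected $B_l$ with $x_l\in B_l\subseteq U_l$, the image $f(B_l)$ is an interval containing $r_l$, so either $r_l$ is interior to it (IVT case) or $r_l\in\{\inf f(B_l),\sup f(B_l)\}\subseteq R_f$ (extremum case); the clause ``for every such connected $B$'' is harmless but not needed, since one witness $B_l$ suffices. What your approach buys is independence from \cite{AMK2} and a transparent view of exactly where local connectedness and metrizability enter; what it costs is length.
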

\begin{proof}
\noindent $(a)\Rightarrow (b)$. Since $X$ is hemicompact, $C_k(X)$ is first countable. Now the proof follows from Corollary 4.14 in \cite{AMK2}.

\noindent $(b)\Rightarrow (a)$. This follows from Corollary \ref{Ckh(X) first countable and hemicompact}.
\end{proof}
By Corollary \ref{Ckh(X) first countable and hemicompact}, hemicompactness of $X$ is a necessary condition for the metrizability of the space $C_{kh}(X)$. Our next example shows that even for a locally connected metrizable space $X$, hemicompactness of $X$ is not a sufficient condition for the metrizability of $C_{kh}(X)$.

\begin{example}
Let $X$ be the space of real numbers with usual topology. Then by Theorem \ref{R-separability of Ckh(X)} of next section, we have $C_{kh}(X)$ is separable. But $C_{kh}(X)$ is not second countable for any topological space $X$ by Theorem \ref{second_countability_of_Ckh(X)} of the next section. So it cannot be metrizable.
\end{example}

\section{separability of $C_{kh}(X)$}
In this section, we discuss the separability of the space $C_{kh}(X)$. For this we first need to study the $R$-sets and $R$-separable spaces.

 A nonempty subset $B$ of a space $X$ is said to be an $R$-set if there exists a countable collection $T=\{f_n : n\in \mathbb{N}\}$ of real-valued continuous functions on $X$ such that $\bigcup_{n\in \mathbb{N}}f_n(B)=\mathbb{R}$. A space $X$ is said to be $R$-separable if there exists a countable collection of $R$-sets in $X$ such that every nonempty open set in $X$ contains some member of this collection.

It is easy to see that every $R$-set is uncountable and any set containing an $R$-set is an $R$-set. Every $R$-separable space is separable and contains no isolated point. In order to show that there exists a large class of $R$-separable spaces, we prove the following propositions.

\begin{proposition}
A perfect Polish space is $R$-separable.
\end{proposition}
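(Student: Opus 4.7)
The plan is to exhibit a countable family of $R$-sets by placing a Cantor set inside each member of a countable base of the space.

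First I would fix a countable base $\{U_n : n \in \mathbb{N}\}$ of nonempty open sets for $X$, which exists since $X$ is second countable. For each $n$, the open subspace $U_n$ inherits being Polish (open subsets of Polish spaces are Polish) and perfect (any point isolated in $U_n$ relative to $U_n$ would, via a witnessing $X$-open set intersected with $U_n$, be isolated in $X$). Hence $U_n$ is a perfect Polish space, and by the classical Cantor--Bendixson-type theorem (every nonempty perfect Polish space contains a homeomorphic copy of the Cantor set $2^{\mathbb{N}}$), there exists a compact set $C_n \subseteq U_n$ homeomorphic to the Cantor set.

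Next I would verify that each $C_n$ is an $R$-set in $X$. Since the Cantor set admits a continuous surjection onto $[-k, k]$ for every $k \in \mathbb{N}$, pick for each $k$ a continuous surjection $g_{n,k} : C_n \to [-k, k]$. The set $C_n$ is compact, hence closed, in the metric (so normal) space $X$, so by the Tietze extension theorem each $g_{n,k}$ extends to a continuous function $f_{n,k} : X \to \mathbb{R}$. Then
\[
\bigcup_{k \in \mathbb{N}} f_{n,k}(C_n) \supseteq \bigcup_{k \in \mathbb{N}} [-k, k] = \mathbb{R},
\]
so $\{f_{n,k} : k \in \mathbb{N}\}$ witnesses that $C_n$ is an $R$-set.

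Finally, the family $\{C_n : n \in \mathbb{N}\}$ is countable, and for any nonempty open set $W \subseteq X$, since $\{U_n\}$ is a base, some $U_n \subseteq W$, and therefore $C_n \subseteq U_n \subseteq W$. This shows $X$ is $R$-separable. The main technical input is the embedding of a Cantor set into every nonempty open subset of a perfect Polish space; everything else (Tietze extension, continuous surjections from the Cantor set onto compact intervals) is routine.
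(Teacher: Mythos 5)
Your proof is correct and follows essentially the same route as the paper: place a Cantor set inside each member of a countable base, then use continuous surjections of the Cantor set onto the intervals $[-k,k]$ (extended over $X$) to witness the $R$-set property. The only cosmetic difference is that you invoke the fact that open subspaces of a Polish space are Polish, whereas the paper first shrinks each basic open set to a closed subset $\overline{V_n}\subseteq U_n$ and uses that closed subspaces are Polish; both are standard and equally valid.
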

\begin{proof}
Let $X$ be a perfect Polish space and $\mathcal{B}=\{U_n : n\in \mathbb{N}\}$ be a countable base for $X$. It is sufficient to prove that there is a countable collection $\mathcal{S}$ of $R$-sets such that every element of $\mathcal{B}$ contains an element of $\mathcal{S}$. By using the regularity of $X$, for each $n\in \mathbb{N}$, if $x\in U_n$ there exists $V_n$ open in $X$ such that $x\in V_n\subseteq \overline{V_n}\subseteq U_n$. As $X$ is a perfect space and $V_n$ is open in $X$, this implies that $V_n$ contains no isolated point. Then $\overline{V_n}$ is a perfect set in $X$ (see exercise $30B.3$ of \cite{sw}). Since closed subset of a Polish space is a Polish space and every perfect Polish space contains a Cantor set, $\overline{V_n}$ is a perfect Polish space and it contains a Cantor set $S_n$. Next, we show that $S_n$ is an $R$-set. Since $\mathbb{R}=\cup[-m,m]$ and every compact metric space is a continuous image of a Cantor set, for each $m\in \mathbb{N}$, there exists $f_m\in C(X)$ such that $f_m(S_n)=[-m,m]$. Thus $\cup_{m\in \mathbb{N}} f_m(S_n)=\mathbb{R}$. Hence $S_n$ is an $R$-set.
\end{proof}

\begin{proposition}\label{corollarylocallyconnectedandRseparable}
If $X$ is a space having a countable $\pi$-base consisting of nontrivial connected sets, then $X$ is $R$-separable.
\end{proposition}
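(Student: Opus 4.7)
\smallskip

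\noindent\textbf{Proof proposal.} Let $\{U_n : n\in\mathbb{N}\}$ be the given countable $\pi$-base of nontrivial connected subsets of $X$. The plan is to show that each $U_n$ is itself an $R$-set; since every nonempty open set in $X$ contains some $U_n$ by the $\pi$-base property, the collection $\{U_n : n\in\mathbb{N}\}$ then witnesses $R$-separability directly.

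To show that a fixed $U_n$ is an $R$-set, I would exploit the combination of connectedness and the Tychonoff property of $X$. Since $U_n$ is nontrivial, pick two distinct points $a,b\in U_n$. For each $m\in\mathbb{N}$, complete regularity together with the Hausdorff separation of $\{a\}$ and $\{b\}$ produces a continuous $f_m:X\to\mathbb{R}$ with $f_m(a)=-m$ and $f_m(b)=m$. Because $U_n$ is connected, $f_m(U_n)$ is a connected subset of $\mathbb{R}$ containing both $-m$ and $m$, hence it contains the whole interval $[-m,m]$. Therefore
\[
\bigcup_{m\in\mathbb{N}} f_m(U_n)\;\supseteq\;\bigcup_{m\in\mathbb{N}}[-m,m]\;=\;\mathbb{R},
\]
so the countable family $\{f_m:m\in\mathbb{N}\}$ certifies that $U_n$ is an $R$-set.

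Finally, given any nonempty open set $W\subseteq X$, the $\pi$-base property yields some $U_n\subseteq W$, which is an $R$-set by the previous step; moreover, the text already notes that any superset of an $R$-set is an $R$-set, but here I only need that $W$ \emph{contains} the $R$-set $U_n$, which is exactly the definition of $R$-separability. Thus $\{U_n : n\in\mathbb{N}\}$ is the desired countable collection of $R$-sets, and $X$ is $R$-separable. There is no real obstacle: the only nontrivial step is observing that a continuous real-valued function on a connected set covers the closed interval between any two of its values, and this hands us the cofinal family of intervals $[-m,m]$ needed to exhaust $\mathbb{R}$.
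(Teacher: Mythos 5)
Your proposal is correct and follows essentially the same route as the paper: both arguments use the Tychonoff property to separate two distinct points of a nontrivial connected $\pi$-base element by a continuous function whose range on that set must then contain $[-m,m]$, and conclude by taking the union over $m$. The only difference is that you spell out the scaling step $f_m(a)=-m$, $f_m(b)=m$ explicitly, which the paper leaves implicit.
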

\begin{proof}
It is sufficient to prove that every nontrivial connected subset of $X$ is an $R$-set. Let $U$ be a nontrivial connected subset of $X$, and let $x_U$, $y_U$ be distinct elements of $U$. Since $X$ is a Tychonoff space and $U$ is a connected subset of $X$, for each $n\in \mathbb{N}$, there exists $f_n\in C(X)$ such that $[-n,n]\subseteq f_n(U)$. Therefore $\cup_{n\in \mathbb{N}} f_n(U)=\mathbb{R}$, and hence $U$ is an $R$-set.
\end{proof}

We have the following necessary and sufficient conditions for the separability of the space $C_{kh}(X)$.
\begin{theorem}\label{R-separability of Ckh(X)}
Let $X$ be a space with a countable $\pi$-base. Then the following are equivalent.
\begin{enumerate}
\item [$(a)$] $C_{kh}(X)$ is separable.
\item [$(b)$] $C_{ph}(X)$ is separable.
\item [$(c)$] $X$ is an $R$-separable submetrizable space.
\end{enumerate}
\end{theorem}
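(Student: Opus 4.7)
The plan is to prove $(a)\Rightarrow(b)\Rightarrow(c)\Rightarrow(a)$. The first implication is immediate: since the compact-open topology refines the point-open topology, $C_{ph}(X)\le C_{kh}(X)$, and any countable dense subset of a finer topology remains dense in any coarser one.

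For $(b)\Rightarrow(c)$, let $D=\{f_n:n\in\mathbb{N}\}$ be countable and dense in $C_{ph}(X)$; then $D$ is also dense in $C_p(X)$. \emph{Submetrizability:} $D$ separates the points of $X$, because for distinct $x,y\in X$ complete regularity yields $h\in C(X)$ with $h(x)=0$ and $h(y)=1$, so the nonempty $p$-open set $[x,(-\tfrac12,\tfrac12)]^+\cap[y,(\tfrac12,\tfrac32)]^+$ is met by some $f_n$, which then separates $x$ from $y$; hence $x\mapsto(f_n(x))_n$ is a continuous injection of $X$ into the separable metrizable space $\mathbb{R}^{\mathbb{N}}$. \emph{$R$-separability:} if $\{P_k\}$ is the countable $\pi$-base, then for each $k$ and each $r\in\mathbb{R}$ complete regularity makes $[P_k,r]^-$ a nonempty open set in $C_{ph}(X)$, and density of $D$ produces $f_n$ with $r\in f_n(P_k)$; so $\bigcup_n f_n(P_k)=\mathbb{R}$ and each $P_k$ is an $R$-set, and the $\pi$-base property yields $R$-separability.

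The main work is $(c)\Rightarrow(a)$: construct a countable $\mathcal{D}\subseteq C(X)$ dense in $C_{kh}(X)$ in two stages. First, the countable $\pi$-base forces $X$ separable, so together with submetrizability $X$ admits a weaker \emph{separable} metric topology; the countable family $\{h_n\}\subseteq C(X)$ realizing the resulting continuous injection $X\hookrightarrow\mathbb{R}^{\mathbb{N}}$ generates, via rational polynomials, a countable algebra $\mathcal{Q}$ that I claim is dense in $C_k(X)$ (on any compact $A$, continuous functions factor through the compact metric image of $A$ in $\mathbb{R}^{\mathbb{N}}$, and Stone--Weierstrass applies uniformly). Second, $R$-separability supplies a countable witness family $\{f_{k,n}\}$ with $B_k\subseteq P_k$ and $\bigcup_n f_{k,n}(B_k)=\mathbb{R}$. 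Let $\mathcal{D}$ consist of the patchings $\tilde P=\phi\cdot f_{k,n}+(1-\phi)\cdot P$ parameterized by $P\in\mathcal{Q}$, a finite tuple of witness indices $(k_j,n_j)$, and cut-off functions $\phi$ from a countable library supported in ever-smaller neighborhoods of canonical points of the $B_k$'s. For density, given a nonempty basic $G=\bigcap_i[A_i,V_i]^+\cap\bigcap_j[U_j,r_j]^-$ in the normalized form of Proposition~\ref{pibase for Ckh(X)}, pick $P\in\mathcal{Q}$ with $P(A_i)\subseteq V_i$, then for each $j$ use $R$-separability inside $U_j\cap P_{k_j}$ to find indices $(k_j,n_j)$ and $y_j\in B_{k_j}\subseteq U_j$ with $f_{k_j,n_j}(y_j)=r_j$, and apply the corresponding patch at each $y_j$. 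The main obstacle is ensuring the patches preserve every $[A_i,V_i]^+$ constraint: the normalized form resolves this because either $U_j$ is disjoint from $\bigcup_iA_i$ (so a patch supported inside $U_j$ leaves $P|_{A_i}$ unchanged) or $U_j\subseteq\bigcup\{A_i:r_j\in V_i\}$ (so each relevant $V_i\in\mathcal{V}$ is a bounded open interval containing both $P(y_j)$ and $r_j$, and by convexity $\tilde P$ stays in $V_i$ once $\phi_j$ is chosen with small enough support to keep $f_{k_j,n_j}$ close to $r_j$ there). Pairwise disjointness $\overline{U_i}\cap\overline{U_j}=\emptyset$ allows the finitely many patches to coexist, and the countable library of cut-offs $\phi$ — built from neighborhood bases in the weaker separable metric — provides the required arbitrarily small supports, yielding $\tilde P\in\mathcal{D}\cap G$.
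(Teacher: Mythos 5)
Your implications $(a)\Rightarrow(b)$ and $(b)\Rightarrow(c)$ are correct and essentially the paper's argument (the paper cites a corollary of McCoy--Ntantu for submetrizability where you reprove it via a point-separating countable family; either is fine). The overall architecture of $(c)\Rightarrow(a)$ --- a countable dense subset of $C_k(X)$, modified by patches built from the $R$-set witness functions, with the normalized $\pi$-base of Proposition \ref{pibase for Ckh(X)} controlling the $[A_i,V_i]^+$ constraints --- is also the paper's. But the execution of the patching has a genuine gap.

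The problem is the countability of your family $\mathcal{D}$. To get $\tilde P\in[U_j,r_j]^-$ you need $\tilde P(y_j)=r_j$, hence $\phi_j(y_j)=1$ at the specific point $y_j\in B_{k_j}$ where $f_{k_j,n_j}(y_j)=r_j$. That point depends on $r_j$, which ranges over all of $\mathbb{R}$, so the relevant points $y_j$ range over an uncountable subset of the (necessarily uncountable) $R$-set $B_{k_j}$. A countable library of cut-offs ``supported in ever-smaller neighborhoods of canonical points of the $B_k$'s'' cannot be equal to $1$ at an arbitrary prescribed $y_j$; if $\phi_j$ is centered at a canonical point instead, then $\tilde P(y_j)=P(y_j)\neq r_j$ in general and $\tilde P$ need not lie in $[U_j,r_j]^-$. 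So either $\mathcal{D}$ is uncountable or it is not dense; as written, the two requirements are in direct tension. The fix --- and this is what the paper actually does --- is to make the patch independent of $r_j$ and $y_j$: choose $E\in\mathcal{U}$ and $S\in\mathcal{K}$ with $S\subseteq E\subseteq U_j$ and $S$ completely separated from $\overline{U_j}\setminus E$, and build a single continuous function mapping into $\overline{V_i}$ that agrees with the witness $f^{S}$ on \emph{all} of $(f^{S})^{-1}(V_i)\cap S$ (not just at one point) and with $P$ off $E$. Such a patch is parameterized only by the countable data $(P, U_j, V_i, S, f^{S})$, yet it automatically takes the value $r_j$ at some point of $S$ for every $r_j\in V_i\cap f^S(S)$; forcing the range into $\overline{V_i}$ (with $\overline{V_i}$ inside the original target interval) replaces your convexity/small-support argument for preserving the $[A_i,V_i]^+$ constraints. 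A secondary, smaller issue: cut-offs built from the weaker separable metric need not have support contained in the $X$-open set $U_j$, so even the supports should come from the $\pi$-base and complete regularity of $X$ rather than from metric balls.
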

\begin{proof}
\noindent $(a)\Rightarrow (b)$. It follows from the fact that $C_{ph}(X)\leq C_{kh}(X)$.

\noindent $(b)\Rightarrow (c)$. If $C_{ph}(X)$ is separable, then $C_p(X)$ is separable. Therefore, Corollary 4.2.2 in \cite{mn} implies that $X$ is submetrizable. Now we show that every open set in $X$ is an $R$-set. Let $T=\{f_n : n\in \mathbb{N}\}$ be a countable dense set in $C_{ph}(X)$ and $U$ be any nonempty open set in $X$. Suppose that there exists a $y\in \mathbb{R}\setminus \bigcup_{n\in \mathbb{N}}f_n(U)$. Then $[U, y]^-$ is a nonempty open set in $C_{ph}(X)$ which does not intersect with $T$. Thus $U$ is an $R$-set in $X$. Since $X$ has a countable $\pi$-base, $X$ is $R$-separable.

\noindent $(c)\Rightarrow (a)$. Suppose that $\mathcal{K}=\{S_n: n\in\mathbb{N}\}$ is a countable family of $R$-sets in $X$ such that every nonempty open subset of $X$ contains some member of $\mathcal{K}$.  Let $\mathcal{U}$ be a countable $\pi$-base for $X$ and $\mathcal{V}$ be a countable base for $\mathbb{R}$ consisting of bounded open intervals. For convenience of notation, for each
$n\in \mathbb{N}$, let $\hat{\mathcal{U}}^n$ be the set of $(U_1,\ldots,U_n)\in \mathcal{U}^n$ such that $\{U_1,\ldots,U_n\}$ is a collection of pairwise disjoint sets. Since $X$ is a separable submetrizable space, the space $C_k(X)$ is separable. So let $\mathcal{F}$ be a countable dense subset of $C_k(X)$.

For each $f\in \mathcal{F}$ and $n\in \mathbb{N}$, let $\mathcal{T}_f^n=\{((U_1,\ldots,U_n),(V_1,\ldots,V_n))\in\hat{\mathcal{U}}^n \times \mathcal{V}^n: f(\overline{U_i})\subseteq V_i$ for $1\leq i\leq n\}$. Since $f$ is continuous and $X$ is a Tychonoff space, for each $n\in\mathbb{N}$, $\mathcal{T}_f^n\neq \emptyset$. Then $\mathcal{T}_f=\cup_{n\in \mathbb{N}}\mathcal{T}_f^n$ is countable.

Let $T\in \mathcal{T}_f$, say $T=((U_1,\ldots,U_n),(V_1,\ldots,V_n))$. Since $X$ is an $R$-separable Tychonoff space, for each $i=1,\ldots,n$, there exist $E_i\in \mathcal{U}$, $S_i\in \mathcal{K}$ and a countable collection $F_{S_i}$ in $C(X)$ such that $S_i\subseteq E_i\subseteq U_i$, $\overline{U_i}\setminus E_i$ and $S_i$ are completely separated sets in $X$ and $\cup \{g(S_i):g\in F_{S_i}\}=\mathbb{R}$. For each $i=1,\ldots,n$, take $\mathcal{L}_i=\{f^{S_{i}}\in F_{S_{i}}:(f^{S_{i}})^{-1}(V_i)\cap S_{i}\neq\emptyset\}$; then for each $f^{S_{i}}\in\mathcal{L}_i$, there exists a continuous function $f^{S_{i}}_i:X \rightarrow \overline{V_i}$ such that $f^{S_{i}}_i(x)=f^{S_{i}}(x)$ for  $x\in (f^{S_{i}})^{-1}(V_i)\cap S_{i}$ and $f^{S_{i}}_i(x)=f(x)$ for each $x\in \overline{U_i}\setminus E_i$. Take $S_T=\{S_{1},\ldots,S_{n}\}$ and let $\mathcal{H}_{S_T}=\{ (f^{S_{1}}_1,\ldots,f^{S_{n}}_n): f^{S_{i}}\in \mathcal{L}_i, 1\leq i\leq n\}$. Clearly $\mathcal{H}_{S_T}$ is countable.

Then for each $T\in \mathcal{T}_f$ and $H\in\mathcal{H}_{S_T}$, where $$T=((U_1,\ldots,U_m),(V_1,\ldots,V_m)) \text{ and } H = \{f^{S_1}_1,\ldots,f^{S_m}_m\}$$ define a continuous function $f_{T,H}:X \rightarrow \mathbb{R}$ as follows.

\[f_{T,H}(x) =\left\{\begin{array}{cc}
                  f^{S_i}_i(x) & x\in U_i \mbox{ and } 1\leq i\leq m \\
                  f(x) & x\in X\setminus\ U_i\mbox{ and } 1\leq i\leq m.\\
                \end{array}\right.
   \]
Consider $\mathcal{F'}$ to be the collection $\{f_{T,H}:f\in \mathcal{F}, T\in \mathcal{T}_f, H\in\mathcal{H}_{S_T}\}$, which we now prove is dense in $C_{kh}(X)$.

Let $G$ be any nonempty open set in $C_{kh}(X)$, then Proposition \ref{pibase for Ckh(X)} implies that $G$ contains a nonempty open set $B=[A_1,G_1]^+\cap\ldots\cap[A_m,G_m]^+\cap [U_1, r_1]^-\cap\ldots\cap[U_n, r_n]^-$ such that for each $j = 1,\ldots,n$, either $U_j\cap A_i=\emptyset$ for all $i=1,\ldots,m$, or $U_j\subseteq\cup\{A_i : r_j\in G_i\}$ and $U_j\cap \cup\{A_i : r_j \notin G_i\}=\emptyset$; and $A_i$ is a compact subset of $X$, $G_i\in \mathcal{V}$ whenever $i\in\{1,\ldots,m\}$, and $\overline{U_i}\cap\overline{U_j}=\emptyset$ for $i\neq j$. Let $f\in \mathcal{F}\cap [A_1,G_1]^+\cap\ldots\cap[A_m,G_m]^+$ and let $I_j=\{1\leq i\leq m:r_j\in G_i, U_j\cap A_i\neq\emptyset\}$. We can assume that $I_j\neq\emptyset$ and $|I_j|=t_j$ for $1\leq j\leq k$, where $k\leq n$ and $I_j=\emptyset$ for $k+1\leq j\leq n$.

If $I_j\neq\emptyset$, then for each $i\in I_j$, $r_j\in G_i$ and $U_j\cap A_i\cap f^{-1}(G_i)\neq\emptyset$. Since $X$ is a perfect space, for each $i\in I_j$, we can choose $x_i^j\in f^{-1}(G_i)\cap U_j$ such that $\{x_i^j:i\in I_j\}$ is a collection of distinct points. Also there exists $V_i^j\in \mathcal{V}$ such that $f(x_i^j), r_j\in V_i^j\subseteq \overline{V_i^j}\subseteq G_i$. Since $X$ is a Hausdorff space, there exists $\{B_i^j:i\in I_j\}$ a collection of disjoint open sets in $X$ such that $x_i^j\in B_i^j\subseteq \overline{B_i^j}\subseteq f^{-1}(V_i^j)\cap U_j$. Then this implies that there exists $(U_{1}^j,\ldots,U_{t_j}^j)\in \hat{\mathcal{U}}^{t_j}$ such that $U_i^j\subseteq \overline{U_i^j}\subseteq \overline{B_i^j}\subseteq  f^{-1}(V_i^j)\cap U_j$ for each $i\in I_j$. Now for each $i\in I_j$, $f(\overline{U_i^j})\subseteq V_i^j$. Also there exist $S_i^j\in\mathcal{K}$, $f^{S_i^j}\in F_{S_i^j}$ and $E_i^j\in \mathcal{U}$ such that $S_i^j\subseteq E_i^j\subseteq U_i^j$, $\overline{U_i^j}\setminus E_i^j$ and $S_i^j$ are completely separated in $X$ and $r_j\in f^{S_i^j}(S_i^j)$ for each $i\in I_j$.

If $I_j=\emptyset$, then $U_j\cap A_i=\emptyset$ for all $i\in\{1,\ldots,m\}$. Now take any $x\in U_j$ and let $V_j\in\mathcal{V}$ be any neighborhood of $f(x)$ containing $r_j$. Since $f$ is continuous, there exists $U_j^{'}\in \mathcal{U}$ such that $ U_j^{'}\subseteq\overline{U_j^{'}}\subseteq U_j$ and $f(\overline{U_j^{'}})\subseteq V_j$. Also there exist $S_j\in\mathcal{K}$, $f^{S_j}\in F_{S_j}$ and $E_j^{'}\in\mathcal{U}$ such that $S_j\subseteq E_j^{'}\subseteq U_j^{'}$, $\overline{U_j^{'}}\setminus E_j^{'}$ and $S_j$ are completely separated in $X$ and $r_j\in f^{S_j}(S_j)$. Now take

$$U=(U_1^1,\ldots,U_{t_1}^1,U_{1}^2,\ldots,U_{t_2}^2,\ldots, U_{1}^k,\ldots,U_{t_k}^k,U_{k+1}^{'},\ldots,U_{n}^{'}),$$ $$V=(V_1^1,\ldots,V_{t_1}^1,V_1^2,\ldots,V_{t_2}^2,\ldots, V_{1}^k,\ldots,V_{t_k}^k,V_{k+1},\ldots,V_{n}).$$
 Since $\overline{U_i}\cap \overline{U_j}=\emptyset$ for $i\neq j$, we have $T=(U,V)\in \mathcal{T}_f$. Take $$H=(f^{S_1^1}_1,\ldots,f^{S_{t_1}^1}_{t_1},f^{S_1^2}_1,\ldots,f^{S_{t_2}^2}_{t_2},\ldots, f^{S_1^k}_1,\ldots,f^{S_{t_k}^k}_{t_k},f^{S_{k+1}}_{k+1},\ldots,f^{S_{n}}_n).$$
     Then it is easy to see $f_{T,H}\in \mathcal{F'}\cap B$.
\end{proof}

Since the sum function $s:C_{h}(\oplus \{X_\alpha :\alpha \in \Gamma\})\rightarrow \Pi\{C_{h}(X_\alpha): \alpha\in \Gamma\}$ is a homeomorphism (see Proposition 2.11 of \cite{AMK2}), we have the following result.
\begin{corollary}
If $X$ is the topological sum of c (where $c=2^{\aleph_0}$) or fewer $R$-separable metrizable spaces, then $C_{kh}(X)$ is separable.
\end{corollary}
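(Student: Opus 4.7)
The plan is to factor $C_{kh}(X)$ as a topological product of the spaces $C_{kh}(X_\alpha)$ and then apply the Hewitt--Marczewski--Pondiczery theorem, which guarantees that a product of at most $\mathfrak{c}$ separable spaces is separable.

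First I would check that every factor $C_{kh}(X_\alpha)$ is separable. Each $X_\alpha$ is $R$-separable and metrizable, hence separable (since every $R$-separable space is separable), and therefore second countable; in particular it has a countable $\pi$-base, and it is submetrizable because it is already metrizable. Thus the hypotheses of Theorem \ref{R-separability of Ckh(X)} are satisfied for every $\alpha$, and condition $(c)\Rightarrow(a)$ of that theorem yields separability of $C_{kh}(X_\alpha)$.

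Next I would upgrade the homeomorphism cited just before the statement from $C_h$ to $C_{kh}$. Let $s\colon C(X)\to\prod_{\alpha\in\Gamma}C(X_\alpha)$ be the natural restriction map $s(f)=(f|_{X_\alpha})_\alpha$, which is a bijection. The cited Proposition 2.11 of \cite{AMK2} asserts that $s$ is a homeomorphism when both sides carry the $h$-topology. The analogous statement for the $k$-topology is straightforward: every compact $K\subseteq X=\oplus X_\alpha$ meets only finitely many summands, say $X_{\alpha_1},\dots,X_{\alpha_n}$, and each $K\cap X_{\alpha_i}$ is compact in $X_{\alpha_i}$, so
$$[K,V]^+ \;=\; \bigcap_{i=1}^n [\,K\cap X_{\alpha_i},\,V\,]^+,$$
which is precisely the $s$-preimage of a basic open set of $\prod C_k(X_\alpha)$; conversely, every compact set of a single summand is compact in the sum, so the image of a subbasic set $[K_\beta,V]^+$ of $C_k(X)$ agrees with the pullback under the projection $p_\beta$. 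Hence $s$ is a homeomorphism for both the $k$- and $h$-topologies, and since the product topology commutes with taking the join of topologies coordinate-wise, $s$ is a homeomorphism of $C_{kh}(X)$ onto $\prod_{\alpha\in\Gamma} C_{kh}(X_\alpha)$.

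Finally, with $|\Gamma|\le\mathfrak{c}$ and each $C_{kh}(X_\alpha)$ separable, the Hewitt--Marczewski--Pondiczery theorem gives separability of the product, and hence of $C_{kh}(X)$. The only delicate point is verifying the product decomposition for the $kh$-topology; no real obstacle arises, because compact sets in the topological sum are confined to finitely many summands, which reduces everything to the already-cited $h$-case and its obvious $k$-analogue.
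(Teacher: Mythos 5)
Your proof is correct and follows essentially the route the paper intends: each summand satisfies the hypotheses of Theorem \ref{R-separability of Ckh(X)}, the sum function gives $C_{kh}(X)\cong\prod_\alpha C_{kh}(X_\alpha)$ (the paper cites only the $h$-case but the $k$-case and the compatibility with joins are exactly as you argue), and the Hewitt--Marczewski--Pondiczery theorem finishes the argument. No gaps.
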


\begin{corollary} \label{locallyconnected_Ckh(X) separability}
If $X$ is a locally connected separable metric space without isolated points, then the space $C_{kh}(X)$ is separable.
\end{corollary}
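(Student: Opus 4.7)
The plan is to deduce this directly from Theorem \ref{R-separability of Ckh(X)}, which characterizes separability of $C_{kh}(X)$ (for spaces with a countable $\pi$-base) as the conjunction of $R$-separability and submetrizability of $X$. Two of the three hypotheses of that theorem are essentially free: since $X$ is a separable metric space, it is second countable, giving us a countable base (hence a countable $\pi$-base) for $X$; and since $X$ is metrizable, it is certainly submetrizable (the identity into itself is a continuous injection into a metric space).

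So the only genuine work is to establish that $X$ is $R$-separable, and for this I would invoke Proposition \ref{corollarylocallyconnectedandRseparable}. That proposition asks for a countable $\pi$-base consisting of nontrivial connected sets. To build one, start from a countable base $\mathcal{B}$ of $X$. For each nonempty $B \in \mathcal{B}$, use local connectedness of $X$ to pick a point $x_B \in B$ together with an open connected neighborhood $C_B$ of $x_B$ with $C_B \subseteq B$. Set $\mathcal{B}' = \{C_B : B \in \mathcal{B},\, B \neq \emptyset\}$. This collection is countable, consists of connected open sets, and is a $\pi$-base: any nonempty open $W \subseteq X$ contains some $B \in \mathcal{B}$ (since $\mathcal{B}$ is a base), and then $C_B \subseteq B \subseteq W$.

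Next I would verify nontriviality of the members of $\mathcal{B}'$. Since $X$ has no isolated points, every nonempty open subset of $X$ is infinite; in particular each $C_B$ contains at least two points. Hence $\mathcal{B}'$ is a countable $\pi$-base of nontrivial connected subsets of $X$, so Proposition \ref{corollarylocallyconnectedandRseparable} yields that $X$ is $R$-separable. Combining this with submetrizability and the existence of a countable $\pi$-base, Theorem \ref{R-separability of Ckh(X)} gives that $C_{kh}(X)$ is separable.

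There is no real obstacle here: the substantive content has been absorbed into the earlier theorem and proposition, and the only thing to check is the construction of a connected $\pi$-base and the observation that the absence of isolated points forces the members of that $\pi$-base to be nontrivial. This is a short assembly argument rather than a new piece of analysis.
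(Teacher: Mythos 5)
Your proposal is correct and follows exactly the intended route: the corollary is an immediate consequence of Theorem \ref{R-separability of Ckh(X)} together with Proposition \ref{corollarylocallyconnectedandRseparable}, once one notes that a separable metric space is second countable and submetrizable, and that local connectedness plus the absence of isolated points yields a countable $\pi$-base of nontrivial connected open sets. The paper states this as a corollary without proof, and your assembly argument supplies precisely the missing routine verification.
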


\begin{corollary}\label{irrationalCkh(X)separability}
If $X$ is a perfect Polish space, then the space $C_{kh}(X)$ is separable.
\end{corollary}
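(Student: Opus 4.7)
The plan is to invoke Theorem \ref{R-separability of Ckh(X)} directly, after verifying its three hypotheses for a perfect Polish space $X$. Recall that a Polish space is, by definition, a separable completely metrizable space; so $X$ is second countable, hence has a countable base, which automatically serves as a countable $\pi$-base. This takes care of the standing hypothesis of Theorem \ref{R-separability of Ckh(X)}.

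Next, I would check the two conditions forming part (c) of that theorem. First, $X$ is submetrizable in a trivial way, since it is already metrizable (so the identity map to $X$ itself with its metric topology is a continuous injection into a metric space). Second, $X$ is $R$-separable by the earlier Proposition that asserts precisely this for perfect Polish spaces. Thus $X$ satisfies all the hypotheses of $(c)$ in Theorem \ref{R-separability of Ckh(X)}.

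Finally, applying the implication $(c)\Rightarrow(a)$ of Theorem \ref{R-separability of Ckh(X)} yields that $C_{kh}(X)$ is separable, which is exactly the desired conclusion. There is no real obstacle here since both ingredients --- the $R$-separability of perfect Polish spaces and the characterization in Theorem \ref{R-separability of Ckh(X)} --- have already been established in the preceding material; the corollary is a direct specialization. The only thing worth pausing on is verifying that ``Polish'' indeed supplies a countable $\pi$-base and submetrizability, but these are immediate from the definitions.
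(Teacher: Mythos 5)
Your proposal is correct and is exactly the derivation the paper intends: the corollary follows by combining the proposition that a perfect Polish space is $R$-separable with the implication $(c)\Rightarrow(a)$ of Theorem \ref{R-separability of Ckh(X)}, the countable $\pi$-base and submetrizability being immediate from second countability and metrizability of a Polish space.
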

\begin{example}
Let $X=\mathbb{P}$ be the space of irrationals. Then Corollary \ref{irrationalCkh(X)separability} implies that the space $C_{kh}(X)$ is separable.
\end{example}
\begin{example}
Let $X=S$ be the Sorgenfrey line. Then Theorem \ref{R-separability of Ckh(X)} implies that the space $C_{kh}(X)$ is separable.
\end{example}
Finally, the last theorem, whose proof is similar to the proof of Theorem 5.12 in \cite{AMK}, points out that there are certain countability properties that the space $C_{kh}(X)$ never has.

\begin{theorem}\label{second_countability_of_Ckh(X)}The space $C_{kh}(X)$ is neither Lindel$\ddot{o}$f nor second countable.
\end{theorem}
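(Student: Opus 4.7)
The plan is to exhibit an uncountable closed discrete subspace of $C_{kh}(X)$, which immediately forces both conclusions: closed subspaces of Lindel\"of spaces are Lindel\"of, and an uncountable discrete space is not Lindel\"of, while second countability (even in the absence of separation axioms) implies Lindel\"ofness. So a single construction handles both assertions.

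The natural candidate is $D = \{c_r : r \in \mathbb{R}\}$, where $c_r \in C(X)$ denotes the constant function with value $r$. To show $D$ is discrete in $C_{kh}(X)$, I would use the open-point subbasis: for each $r$, the set $[X,r]^{-}$ is open in $C_h(X) \leq C_{kh}(X)$, contains $c_r$, and contains no $c_s$ with $s \neq r$ because $c_s^{-1}(r) = \emptyset$ in that case. Hence $[X,r]^{-} \cap D = \{c_r\}$.

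To show $D$ is closed, I would take any $f \in C(X) \setminus D$, so $f$ is nonconstant, and pick $x_1, x_2 \in X$ with $f(x_1) \neq f(x_2)$. Choosing disjoint open $V_1, V_2 \subseteq \mathbb{R}$ with $f(x_i) \in V_i$, and noting that singletons are compact, the set $W = [\{x_1\},V_1]^{+} \cap [\{x_2\},V_2]^{+}$ is a neighborhood of $f$ in the compact-open subbasis of $C_{kh}(X)$. Any constant $c_r \in W$ would require $r \in V_1 \cap V_2 = \emptyset$, so $W \cap D = \emptyset$ and $f \notin \overline{D}$.

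Since $|D| = 2^{\aleph_0}$, $D$ is an uncountable closed discrete subspace of $C_{kh}(X)$, which finishes the proof. The argument is essentially identical to the $C_{ph}(X)$ version referenced from \cite{AMK}: the only replacement is using $[\{x_i\},V_i]^{+}$ from the compact-open subbasis in place of $[x_i,V_i]^{+}$ from the point-open subbasis, and the separation step goes through verbatim because singletons are compact. There is no serious obstacle; the only thing to verify carefully is that the separating neighborhoods of a nonconstant $f$ lie in the $kh$-topology, which is immediate since $\{x_i\}$ is compact.
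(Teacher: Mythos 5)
Your proof is correct and is essentially the argument the paper has in mind when it refers to Theorem 5.12 of \cite{AMK}: the constant functions $\{c_r : r\in\mathbb{R}\}$ form an uncountable closed discrete subspace, with discreteness witnessed by the sets $[X,r]^-$ and closedness by compact-open (there, point-open) neighborhoods separating the values of a nonconstant function. Nothing further is needed.
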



\begin{thebibliography}{99}
 \bibliographystyle{plain}   % (uses file "plain.bst")
\bibitem{AD}
R.~Arens and J.~Dugundji, \emph{Topologies for function spaces},
Pacific J.
  Math., \textbf{1} (1951), 5--31.

\bibitem{re}
R. Engelking, \emph{General Topology}, Translated from the polish by the author. 2nd ed. Sigma Series in Pure Mathematics, 6. Berlin: Heldermann Verlag, 1989.

  \bibitem{f}
R.~H. Fox, \emph{On topologies for function spaces}, Bull. Amer.
Math. Soc.,
  \textbf{51} (1945), 429--432.


  \bibitem{GJ}
L. Gillman and M. Jerison, \emph{Rings of Continuous Functions}, D. Van Nostrand, Princeton, NJ, 1960.

\bibitem{G}
G. Gruenhage, \emph{Generalized metric spaces}, In: Handbook of Set-Theoretic Topology (K. Kunen and J.E. Vaughan, eds.), Elsevier Science Publishers B.V., 1984.
\bibitem{AMK}
A. Jindal, R. A. McCoy and S. Kundu, \emph{The open-point and bi-point-open topologies on $C(X)$},
Topology and its Applications, \textbf{187} (2015), 62--74.

\bibitem{AMK2}
A. Jindal, R. A. McCoy and S. Kundu, \emph{The open-point and bi-point-open topologies on $C(X)$: Submetrizability and cardinal functions}, Topology and its Applications, \textbf{196} (2015), 229--240.

%\bibitem{AMK3}A. Jindal, R. A. McCoy and S. Kundu, \emph{Completeness and separability of the open-point and bi-point-open topologies on $C(X)$},
%submitted for publication.

\bibitem{K}
S. Kundu and A. B. Raha, \emph{The bounded-open topology and its relatives}, Rendiconti dell'Istituto di Matematica dell'Universit\`a di Trieste, \textbf{27}, (1995), 61--77.

\bibitem{mn}
 R. A. McCoy and I. Ntantu, \emph{Topological Properties of Spaces of Continuous Functions}, Lecture Notes in Mathematics, 1315. Berlin: Springer-Verlag, 1988.

  \bibitem{AO}
A. V. Osipov, \emph{The set-open topology},
Topology Proceedings, \textbf{37} (2011), 205--217.

\bibitem{O}
A. V. Osipov, \emph{The C-compact-open topology on function spaces},
Topology and its Applications, \textbf{159} (2012), 3059--3066.

\bibitem{Osi}
A. V. Osipov, \emph{Topological-algebraic properties of function spaces with set-open topologies},
Topology and its Applications, \textbf{159} (2012), 800--805.

\bibitem{Velichko}
N. V. Velichko, \emph{$\lambda$-topologies on function spaces}, Journal of Mathematical Sciences, \textbf{131} (2005), 5701--5737.

\bibitem{sw}
S. Willard, \emph{General Topology}, Addison-Wesley, Reading Massachusetts, 1968.
\end{thebibliography}
\end{document}